    \theoremstyle{nonumberplain}
    \newtheorem{proof}{Proof}
\newtheorem{theorem}{Theorem}[section]
\newtheorem{lemma}{Lemma}[section]
\begin{document}
\newcommand{\topcaption}{%
\setlength{\abovecaptionskip}{0.cm}%
\setlength{\belowcaptionskip}{0.cm}%
\caption}

\title{\bf A discontinuous Galerkin based multiscale method for heterogeneous elastic wave equations} 
\date{}
\author{\sffamily Zhongqian Wang$^1$, Shubin Fu$^{2,*}$, Zishang Li$^1$, Eric Chung$^1$\\
    {\sffamily\small $^1$ Department of Mathematics, The Chinese University of Hong Kong, Hong Kong SAR }\\
    {\sffamily\small $^2$ Department of Mathematics, University of Wisconsin - Madison, USA }}
\renewcommand{\thefootnote}{\fnsymbol{footnote}}
\footnotetext[1]{Corresponding author: shubinfu89@gmail.com. }
\maketitle

{\noindent\small{\bf Abstract:}
In this paper, we develop a local multiscale model reduction strategy for the elastic wave equation in strongly heterogeneous media, which is achieved by solving the problem in a coarse mesh with multiscale basis functions. We use the  interior penalty discontinuous Galerkin (IPDG) to couple the multiscale basis functions that  contain important heterogeneous media information. The construction of efficient multiscale basis functions starts with extracting dominant modes of carefully defined spectral problems to represent important media feature, which is followed by solving a constraint energy minimization problems.  Then a Petrov-Galerkin projection and systematization onto the coarse grid is applied. As a result,  an explicit and energy conserving  scheme is obtained for fast online simulation. The method exhibits both coarse-mesh and spectral convergence as long as one  appropriately chose the oversampling size. We rigorously analyze the stability and convergence of the proposed method. Numerical results are provided to show the performance of the multiscale method and confirm the  theoretical results.}

\vspace{1ex}
{\noindent\small{\bf Keywords:}
    CEM-GMsFEM; elastic wave equations; discontinuous Galerkin }
\section{Introduction}
Viscoelasticity and the anisotropy of energy bodies have always been the current research hotspots in multiscale exploration seismology and geophysics. This is because the anisotropy of viscoelasticity and energy body will not only cause the attenuation of seismic wave energy, but also produce velocity anisotropy and dispersion. The existence of these properties will further affect the accuracy of inversion and subsequent imaging. Therefore, the study of the characteristics of wave propagation in viscoelastic anisotropic media can provide valuable information for the subsequent processing of seismic wave data. Besides, the study on wave propagation through fracture media relies on elastic model \cite{chung2016generalized2}. In the past several decades, numerical approaches have been proposed  to study the elastic wave propagation through complicated geological models including the finite difference method (FDM) \cite{claerbout1985imaging, saenger2000modeling, virieux1986p}, finite element method (FEM) \cite{komatitsch2002spectral, drake1989finite, chung2006optimal}, finite volume method (FVM) \cite{eymard2000finite} and pseudospectral method \cite{fornberg1990high} etc. Besides, some improvement strategies of the FDM like the staggered grid \cite{madariaga1976dynamics}, rotating staggered grid \cite{PeiZhenglin2004Numerical} and variable order appeared finite difference format \cite{SongLiwei2017Variable} were also show some success.

Among these methods, the  FEM draw more and more attentions especially in the past two decades due to its  ability  for handling unstructured domain, which is very important for global seismology.
 Some of the earliest attempts to use FEM to solve the wave equation were the traditional continuous Galerkin Finite Element Method (CG-FEM) and its variants the spectral element method (SEM) \cite{ hughes2000continuous, hughes2006multiscale, karakashian1999space,komatitsch1999introduction,komatitsch2000simulation}. But CG-FEM cannot be used to handle the discretization of grids composed of different types of elements, unqualified grids or hanging nodes. These problems can be naturally tackled by the discontinuous Galerkin finite element method(DG-FEM) which is originally developed for the transmission equation \cite{reed1973triangular} and the elliptic partial differential equation \cite{arnold2002unified, riviere1999improved, wheeler1978elliptic}. The DG-FEM enjoys the advantages of the FEM and the FVM, and has been developed on the basis of the two methods, while avoiding the shortcomings of both. 

Regardless of the complexity of the implementation, the above methods suffer from a common issue which is the potential  high computational cost of when solving the wave equation in complicated large scale models. One way to relief this heavy computational burden is adopting the multiscale method. The multiscale method aims to solve the problem in  coarse grid so that  the degrees of freedom can be significantly reduced and thus allow fast simulation.
However, direct simulation on coarse grid with traditional finite element basis functions will lead to large error and this motives the development of using 
special basis functions to replace the polynomial. In \cite{hou1997multiscale}, the  multiscale  finite element method (MsFEM) was proposed for solving highly heterogeneous problem on coarse grid with carefully constructed multiscale basis functions. These multiscale bases are obtained via solving local small scale problems and thus contain important media features, therefore it can yield more accurate coarse-grid solution than using polynomials. Although the MsFEM has  successfully been applied for many practical problems such as flow simulation \cite{chen2003mixed,efendiev2006accurate}, it fails to deal with arbitrary media and 
lacks of accuracy in some scenarios since only one multiscale basis function was constructed in each local domain, and this leads to the development of the 
 generalized multiscale finite element method (GMsFEM) \cite{efendiev2013generalized}. 
In GMsFEM, multiple multiscale basis functions were computed to enrich the basis space in MsFEM and thus improve the accuracy of coarse-grid simulation. 
The GMsFEM has proved to be an efficient and reliable local model reduction technique for many applications \cite{gao2015generalized, fu2017fast,  chung2016mixed, chung2016mixed2, fu2019local,chung2020generalized,chung2019adaptive,chung2018space,li2019generalized}.  
Recently,  the constrained energy minimization generalized multiscale finite element method (CEM-GMsFEM) \cite{chung2018constraint, chung2018nonlinear,Wang2022LocalMM} was proposed to further improve the accuracy of the GMsFEM. The convergence of CEM-GMsFEM depends on the coarse grid size and the spectral value with the least number of basis functions. Initially, the CEM-GMsFEM is only based on CG coupling, but later was extended to the DG formulation \cite{cheung2021explicit}. The DG coupling allows the basis functions to be discontinuous and thus more suitable for media with sharp discontinuity or fracture \cite{chung2016generalized}.

In this paper, the CEM-GMsFEM is used to solve the elastic wave equations within the framework of the Internal Penalty Discontinuity Galerkin (IPDG) method. The method is based on two key components namely the multiscale test basis function and the multiscale experimental basis function.  The first multiscale basis functions are the multiscale test basis functions, which uses an energy reduction technique to select the eigenvectors corresponding to the first few eigenvalues after an incremental ordering of the eigenvalues. This can also be considered as a local spectral problem defined in a coarse block to determine the multiscale basis function. The second multiscale basis functions are obtained from the first component by means of an orthogonality constraint and can also be used for a coarse-scale representation of the numerical solution. Due to the difference between the multiscale test function space and the test function space, we use a finite difference method with a truncation error of second order for time, which results in a technique for localizing the multiscale test basis functions over a coarse oversampling region. The main innovations of this paper are the following: 
\begin{enumerate}
\item  In the study of the anisotropic elastic wave equation, two orthogonality multiscale function spaces are constructed on the discretization of the IPDG method.
\item  A higher-order difference method is used for the temporal discretization of wave propagation in the CEM-GMsFEM.
\item  The CEM-GMsFEM was shown to be locally coarse-grid model stable for the anisotropic elastic wave equation, and yields good error estimates between the coarse- and fine-grid solutions.
\end{enumerate}

Finally, we contextualized our contributions of proving process. We devised one time-step discrete total energy formula that exactly indicates the sum of our method's energy at each distinct moment. We can deduce a global estimate in \autoref{thm1} from the estimates between the fine-scale solution and the coarse-scale solution in Lemma \autoref {lemma1}  and  Lemma \autoref {lemmaa} , which is the highly required finding for assessing various multiscale approaches \cite{gao2015generalized,gavrilieva2020generalized}. By employing oversampling, we increase the accuracy of the study and improve their outcome \cite{chung2014generalized1}. To the best of our knowledge, our technique, on the other hand, is incredibly efficient since it is explicit and does not need inverting any matrices while time marching and the results reported in \autoref{theorem3} are the first of their kind. These theoretical findings provide light on the process underlying the efficiency of elastic wave equations in anisotropic media utilizing CEM-GMsFEMs with DG form.

The structure of this article is as follows. In Section \ref{sec:Preliminaries}, we will introduce the concept of the grid, as well as the basic discretization details, such as the DG-FEM space and the discontinuous Galerkin formula on the coarse grid. The details of the proposed method will be introduced in Section \ref{sec:Construction of multiscale basis function}. This method will be analyzed in Section \ref{sec:Stability and Convergence Analysis}. we will provide the numerical results in Section \ref{sec:Numerical results}. Finally, conclusions will be given in Section \ref{sec:Conclusion}.

\section{Preliminaries}\label{sec:Preliminaries}


We consider the following elastic wave equation in the domain $\Omega \subset \mathbb{R}^{2} $ :
\begin{equation}
    \rho \partial_{t}^{2}{u}=\text{div}\left ( \sigma\left(u\right)\right )+f,\  \text{in} \ \Omega,\label{a}
\end{equation}
where $f =f\left(x,t\right)$ is the given source term. The elastic wave in domain $ \Omega$  is described by the displacement field $u=u\left ( x,t \right )$ and the problem is subject to  a homogeneous Dirichlet boundary condition $u=0$ on $[0, T] \times \partial \Omega$ with initial conditions $$u\left(x, 0\right)=u_{0}\left(x\right)\  \text{and}\  u_{t}\left(x, 0\right)=u_{1}\left(x\right)\ \text{in}\ \Omega.$$
 In (\ref{a}), $\rho $ is the density of the media, and the media in this paper refers to the elastomer. 
\begin{equation}
    {\sigma}=C:{\varepsilon}
\end{equation}
 is the constitutive equation of  anisotropic elastomer, where $\sigma =\sigma \left (u \right )$ represents stress tensor,  $ \epsilon =\epsilon \left ( u \right )$ represents strain tensor, $ C=C_{i,j,p,q}\left ( x \right ) $ is stress where $i,j,p,q=1$ or  $3 $ for two-dimensional plane which can describe the elastic wave propagation in anisotropic media with symmetry up to hexagonal anisotropy with tilted symmetry axis in the $x_{1}-x_{3}$ plane (transversely isotropy with tilted axis, TTI), and monoclinic anisotropy (assuming the symmetry plane is the $x_{1}-x_{3}$ plane).
Then the elastic coefficient matrix is
\begin{equation}
    C=\begin{bmatrix}
C_{11} & C_{13} & 0\\ 
 C_{31}&  C_{33}& 0\\ 
 0& 0 & C_{55}
\end{bmatrix},
\end{equation}
where  $C_{I J} $ are components of the fourth-order elasticity tensor $C $ in Voigt notation.

We assume that  there exist positive constants $0<c_{0} \leq c_{1}$ such that for a.e. $x \in \Omega,$  $ C\left(x\right)$ is a positive definite matrix with $$ c_{0} \leq \lambda_{\min }(C\left(x\right)) \leq \lambda_{\max }\left(C\left(x\right)\right)  \leq c_{1} ,$$
where $\lambda_{\min }\left(C\left(x\right)\right)$ and $\lambda_{\max }\left(C\left(x\right)\right)$ are the minimum and the maximum eigenvalues of $C\left(x\right)$. Let $\epsilon=\frac{1}{2}\left[{\rm grad} u+\left({\rm grad} u\right)^{{\mathrm T}}\right]$ and ${\rm grad} u=\left ( \frac{\partial u_{i}}{\partial x_{j}} \right )_{1\leq i,j\leq 2}$. Moreover, we can write 
\begin{equation}
    \epsilon _{ij}\left ( u \right )=\frac{1}{2}\left ( \frac{\partial u_{i}}{\partial x_{j}}+\frac{\partial u_{j}}{\partial x_{i}}\right), 1\leq i,j\leq 2.
\end{equation}


We are now going to introduce some notions of coarse and fine meshes. 
We define $\mathcal{T}^{H}$ being each element in the coarse grid block as the matching partition of the domain $\Omega$,  and take the symbol $\mathcal{T}^{H}$ as the coarse grid, $H$ as the coarse grid size. The total number of vertices of the coarse mesh is recorded as $N$, and $N_{1}$ is the total number of coarse blocks. In addition,  we call $\mathcal{T}^{h}$ the fine grid, that is, the uniform refinement of the coarse grid, where  $h>0$ is the fine grid size. We assume that the wave equation's fine-mesh discretization offers an accurate approximation of the solution. The standard  bilinear finite element scheme is used for the fine scale solver.

The DG approximation space $V$ is a finite-dimensional function space consisting of a space of coarse-scale locally conforming piecewise bilinear fine-grid basis functions, i.e. it is continuous within the coarse block. In general, however, it is discontinuous at the edges of the coarse grid. Let $u_{h}\in V$ be the solution of \eqref{a}, then it satisfies
\begin{equation}
     \rho s\left( \partial _{t}^{2} u_{h},  v     \right)+a_{\text{DG}}\left (  u_{h}, v \right )=s\left( f,  v     \right), \  \forall\ v \in V,\label{adg}
\end{equation}
where the bilinear form $a_{\text{DG}}\left (  u, v \right )$ is defined as 
\begin{equation}
\begin{aligned}
a_{\text{DG}}\left (  u, v \right )&=\sum_{K\in \mathcal{T} ^{H}}\int_{K} \sigma \left (  u \right ):\epsilon \left ( v \right )\text{dx}\\
&-\sum_{E\in \mathcal{E}^{H}}\int_{E}\left ( \left \{ \sigma\left( u\right) \right \} :\underline{{{[\![ v]\!]}} }+ \underline{{{[\![ u]\!]}}}:  \left \{  \sigma\left( v\right) \right \} \right)\text{ds}\\
&+\sum_{E\in \mathcal{E} ^{H}}\frac{\gamma }{h}\int_{E}\left (\underline{{{[\![ u]\!]}}}:\left \{  C \right \}:\underline{{{[\![ v]\!]}}} +{[\![ u]\!]}\cdot \left \{ D\right \}\cdot {[\![ v]\!]}\right )\text{ds},\label{equ:bilinear}
\end{aligned}
\end{equation}
with 
\begin{equation}
    D=\begin{bmatrix}
C_{11} & 0 & 0\\ 
 0& C_{22} &0 \\ 
 0& 0 & C_{33}
\end{bmatrix}
\end{equation}
being the diagonal matrix made up of $C$'s diagonal entries,  and 
\begin{equation}
s\left( u,  v     \right)= \int_{\Omega } u v \text{dx}.
\end{equation}

 In (\ref{equ:bilinear}),   $[\![ v]\!]$  and \underline{{{[\![v]\!]}}} are the vector jump and the matrix jump respectively and are defined as
\begin{equation}
    {{[\![ v]\!]}}=v^{+}-v^{-},\  \underline{{{[\![v]\!]}}}=v^{+} \otimes n^{+}+v^{-} \otimes n^{-},
\end{equation}
where $'+'$ and $'-'$ respectively represent the values on two adjacent cells $K^+$, $K^-$ sharing the coarse grid edge, and $n^+$ and $n^-$ are the unit outward normal vector on the boundary of $K^+$ and $K^-$. We can also define the average of a tensor ${\sigma} $ as 
\begin{equation}
    \left\{\sigma\right\} =\frac{1}{2}\left( \sigma ^{+}+ \sigma ^{-}\right).
\end{equation}
The initial data in problem (\ref{adg}) are obtained by: find $ u_{h}\left(\cdot, 0\right) $ and $ {u}'_{h}\left(\cdot, 0\right) $ such that for all $v\in V$,
\begin{equation}
\begin{aligned}
s\left(u_{h}\left(\cdot, 0\right), v\right) &=s\left(u_{0}, v\right) \\
s\left({u}'_{h}\left(\cdot, 0\right), v\right)&=s\left(u_{1}, v\right),
\end{aligned}
\end{equation}
where ${u}'_{h}\left(\cdot, 0\right)=\partial_t \left( u_{h}\right)\left(\cdot, 0\right).$


\section{Construction of the multiscale basis functions }\label{sec:Construction of multiscale basis function}
For the GMsDGM, we have  following steps in the multiscale simulation algorithm:
\begin{enumerate}
\item Generate coarse grid and local domain;
\item Construct a multiscale basis function by solving the local eigenvalue problem of each local domain;
\item Construct the projection matrix $R$ from the fine grid to the coarse grid and construct the fine grid system;
\item The solution of the reduced-order model and the reconstruction of the fine-grid solution.
\end{enumerate}
Taking the local domain as the coarse cell $K_{j},$ then for the GMsDGM, we construct two local multiscale spaces (boundary and interior) by solving the local eigenvalue problem on each coarse grid cell $K_{j}.$ 
The multiscale basis functions vary depending on the definition of the local spectrum problem.

\subsection{Multiscale Model}
\textbf{Part\ 1. Multiscale test functions}\\
In this part, we complete the first step of calculating multiscale basis functions, and perform basis function calculations on fine-scale grids. To do that, the restrictions of $V$ on $K_{j}$ is denoted by $V\left(K_{j}\right)$ then it  performs a dimension reduction through a spectral problem: find $\left ( \psi _{i}^{j},\lambda  _{j}^{i} \right )\in V\left ( K_{j} \right )\times \mathbb{R} $ such that
\begin{equation}
    a^{j}\left ( \psi _{i}^{j}, v\right )= \frac{\lambda _{j}^{i}}{H^2}s^{j}\left ( \psi _{i}^{j}, v \right ),\ \forall \ v\in V\left ( K_{j} \right ), \label{equ:eigen}
\end{equation}
where  $H$ denotes the length of the edge $E$ and $\lambda_{j}^{i} $ is the eigenvalue in order $i$-\text{th} which is in ascending for each $i \in \left\{1,...,N\right\}$ in $K_j$.   In (\ref{equ:eigen}),  $a^{j}$ and $s^{j}$ are non-negative and positive symmetric definite bilinear operators defined on $V\left(K_{j}\right) \times V\left(K_{j}\right)$.
We remark that 
\begin{equation}
    a^{j}\left ( w,v \right )=\int_{K_{j}} \sigma \left(w\right) :  \epsilon \left(v\right) \text{dx},
\end{equation}
\begin{equation}
     s^{j}\left( \psi _{i}^{j},  \psi _{k}^{j}\right)=\int_{K_j}   \psi _{i}^{j}  \psi _{k}^{j}   \text{dx}   =   \left\{\begin{matrix}
1 ,& i=k\\ 
0, & i\neq k
\end{matrix}\right.
\end{equation}
where $1\leq i,k \leq g_j.$
Then we use $g_{j}$ eigenvalue functions to construct our local auxiliary multiscale space 
$$V_{\text{aux}}^{j}=\text{span}\left \{ \psi _{i}^{j}:1\leq i\leq g_{j} \right \}.$$
Based on these local spaces, we define the global auxiliary multiscale space $V_{\text{aux}}=\bigoplus_{j=1}^{N}V_{\text{aux}}^{j},$ and the bilinear form $s^{j}$ defined as an inner product $$
s\left ( u,v\right )=\sum_{j=1}^{N}s^{j}\left ( u,v\right ),\  \forall \ u,v \in V_{\text{aux}}.$$
Finally, we define a orthogonal mapping $\pi_j: V\rightarrow V_{\text{aux}}$ such that 
$$\pi_{j}\left ( v\right )=\sum_{i=1}^{g_{j}}s^{j}\left ( v,\psi _{i}^{j} \right )\psi _{i}^{j},\  \forall \ v\in V.$$ Moreover, we let $ \pi=\sum_{j=1}^{N}\pi_{i}.$

\textbf{Part\ 2. Multiscale trial functions}\\
We give the definition of $\psi _{i}^{j}\text{-orthogonal}$ as following: for $\psi _{i}^{j} \in V\left(K_j\right),$ there exists $\phi$  in $V$ such that $\pi\left(\phi\right)=\psi_{i}^{j}.$
Next we define the global multiscale basis function as 
\begin{equation}
\phi _{i}^{j}:=\text{argmin}\left \{ a_{\text{DG}}\left ( \phi ,\phi  \right ):\phi \in V\  \text{is}\ \psi _{i}^{j}\text{-orthogonal}\right \}.
\end{equation}
By introducing a Lagrange multiplier, the minimization problem is equivalent to the following problem: find $\left ( \phi _{i}^{j},\eta_{i}^{j} \right )\in\left (V\times V_{\text{aux}}\right ),$ such that 
\begin{equation}
    \begin{aligned}
    a_{\text{DG}} \left ( \phi _{i}^{j},\phi  \right )+s\left ( \phi ,\eta _{i} ^{j}\right )&=0,\ \forall \ \phi \in V,\\
    s\left ( \phi _{i}^{j}-\psi _{i}^{j} ,\eta \right )&=0,\ \forall \ \eta \in V_{\text{aux}}.
    \end{aligned}
\end{equation}
Define the localized multiscale trial space as:
$$ V_{\rm{cem}}=\text{span}\left\{\phi_{i}^{j}: 1 \leq i \leq g_{j}, 1 \leq j \leq N_1\right\}.$$
Now we denote $K_{j,r}$ as an oversampling domain formed by enlarging $K_{j}$ by $r$ coarse grids. 
For each auxiliary function $\psi _{i}^{j}\in V_{\text{aux}}$, we solve the multiscale basis function possessing the property of constraint energy minimization problem: find $\phi _{i,r}^{j}\in V\left ( K_{j,r} \right ),$ such that 
\begin{equation}
   \phi _{i,r}^{j}={\rm argmin}\left \{ a_{\text{DG}} \left ( \phi ,\phi  \right ):\phi \in V\left ( K_{j,r} \right ){\rm is}\ \psi _{i}^{j}\text{-orthogonal} \right \}. 
\end{equation}
By introducing a Lagrange multiplier, the minimization problem is equivalent to the following problem: find $\left ( \phi _{i,r}^{j},\eta _{i,r}^{j} \right )\in\left ( V\left ( K_{j,r} \right )\times V_{\text{aux}}\left ( K_{j,r} \right )\right ),$ such that
\begin{equation}
    \begin{aligned}
     a_{\text{DG}} \left ( \phi _{i,r}^{j},\phi  \right )+s\left ( \phi ,\eta _{i,r} ^{j}\right )&=0,\ \forall \ \phi \in V\left ( K_{j,r} \right ),\\
    s\left ( \phi _{i,r}^{j}-\psi _{i}^{j} ,\eta \right )&=0,\ \forall\ \eta \in V_{\text{aux}}\left ( K_{j,r} \right ).
    \end{aligned}
\end{equation}
The localized multiscale trial basis functions then used to define the localized multiscale trial space, i.e.
\begin{equation}
V_{\rm{cem}}^{r}=\operatorname{span}\left\{ \phi _{i,r}^{j}: 1 \leq i \leq g_{j}, 1 \leq j \leq N_1\right\}.
\end{equation}

\subsection{Fully Discretization}
  The multiscale solution $u_{H}$ is defined as the solution of the following problem: find $u_{H}\in V_{\rm{cem}}^{r}$, such that 
\begin{equation}
   \int_{\Omega } \rho \partial _{t}^{2} u_{H}\cdot v \text{dx}+a_{\text{DG}}\left ( u_{H}, v \right )=\int_{\Omega } fv \text{dx}\label{ms},\ \forall \ v\in V. 
\end{equation}
First by the definition of $\pi$, for $v \in V_{\rm{cem}}$, we note that 
\begin{equation}
s\left(v-\pi\left(v\right), w\right)=0,\forall\ w \in V_{\text{aux}}  \label{equ:111}
\end{equation}
and
\begin{equation}
a_{\text{DG}}\left(\psi, v-\pi\left(v\right)\right)=0,\forall\  \psi \in V_{\rm{cem}} . \label{equ:222}
\end{equation}

Next we introduce some related methods about time discretization, and let $N_{T}$ be the number of time steps in the time grid, $\tau =T/N_{T}$ be the time step size,\ $f^{m}$\ be defined as the value of function $f$ at the time $t_{m}=m\tau$, and $u_{H}^{m}$ represents the approximate value of $u_H \left(\cdot,t_{m} \right)$. By 
\begin{equation*}
    u ^{m+1}_H =u ^{m}_H+\tau \partial _{t }u^{m}_H+\frac{ \tau ^{2}}{2!}\partial _{t }^{2}u ^{m}_H+\frac{ \tau ^{3}}{3!}\partial _{t }^{3}u ^{m}_H+...
\end{equation*}
and $$   u ^{m-1}_H =u ^{m}_H-  \tau \partial _{t }u ^{m}_H+\frac{  \tau ^{2}}{2!}\partial _{t }^{2}u ^{m}_H-\frac{ \tau ^{3}}{3!}\partial _{t }^{3}u^{m}_H+...,$$
we simply have central finite difference in second-order precision as 
\begin{equation}
\begin{aligned}
    \ddot{\delta }_{H}&=\frac{u _{H}^{m+1}-2u _{H}^{m}+u_{H}^{m-1}}{\tau ^{2}},\\
    \dot{\delta}_{H}&=\frac{u_{H}^{m+1}-u _{H}^{m-1}}{2\tau}.
    \end{aligned}
\end{equation}
Combining (\ref{ms}), (\ref{equ:111}) and  (\ref{equ:222}), the coarse-scale model which reads: for $m \geq 1$, find $u_{H}^{m+1} \in V_{\rm{cem}}$  such that
\begin{equation}
{\pi}'\left(\frac{u_{H}^{m+1}-2 u_{H}^{m}+u_{H}^{m-1}}{\tau^{2}}, v\right)+a_{D G}\left(u_{H}^{m}, v\right)={\pi}'\left(f^{m}, v\right) ,\forall\ v \in V_{\text{aux}}, \label{equ:coarse}
\end{equation}
where the initial data is projected onto the finite element space $V_{\rm{cem}}$ by: find $u_{H}^{0}, u_{H}^{1} \in V_{\rm{cem}}$ such that  for $ v \in V_{\rm{cem}}$,
\begin{equation}
\begin{aligned}
s\left(u_{H}^{0}, v\right)&=s\left(u_{0}, v\right) \\
s\left(u_{H}^{1}, v\right)&=s\left(u_{0}+\tau u_{1}+\frac{\tau^{2}}{2} f^{0}, v\right)-\frac{\tau^{2}}{2} a_{\text{DG}}\left(u_{H}^{0}, v\right).
\end{aligned}
\end{equation}
In (\ref{equ:coarse}), we use the second order central difference approximation for the time derivative. We have also used the operator  $\pi'$ and  $\pi'$  is defined as
\begin{equation}
   {\pi}' \left(w,u\right)=s\left(\pi\left(w\right),\pi\left(u\right)\right).
\end{equation}
 This ensures that the mass matrix is block diagonal, which allows efficient time stepping \cite{cheung2021explicit}.

Finally, our localized coarse-grid model reads: for $ m>1$, find $u_{H}^{m+1}\in V_{\rm{cem}}^r$ such that
\begin{equation}
    {\pi}' \left(\frac{u_{H}^{m+1}-2u_{H}^{m}+u_{H}^{m-1}}{\tau^{2}},u \right)+a_{\text{DG}}\left ( u_{H}^{m},u \right )={\pi}'  \left ( f^{m},u \right ),\ \forall \ u\in V_{\rm{cem}}^r\label{adgtime},
\end{equation}
Considering initial data in the finite element space, we could find $u_{H}^{0},u_{H}^{1}\in V_{\rm{cem}}^r$ such that for all $v\in V_{\rm{cem}}^r$,
\begin{equation}
\begin{aligned}
   s \left ( u _{H}^{0}, v\right )&=s\left ( u _{0}, v\right ),\\
s\left ( u _{H}^{1}, v\right )&=s\left ( u _{0}+\tau u_{1}+\frac{\tau ^{2}}{2}f^{0}, v\right )-\frac{\tau ^{2}}{2}a_{\text{DG}}\left (u _{H}^{0},v \right ).
\end{aligned}
\end{equation}

\section{Stability and Convergence Analysis}\label{sec:Stability and Convergence Analysis}
In this section, we analyze the stability and convergence of \eqref{adgtime}. 
For our analysis, we define the following DG norm
\begin{equation}
    \left \|  v  \right \|_{\text{DG}}^{2}=\sum_{K\in\mathcal{T} ^{H}}\int_{K} \epsilon \left ( v  \right ):C:\epsilon \left ( v  \right ) \text{dx}+\frac{\gamma }{h}\sum_{E\in \mathcal{E}^{H}}\int_{E}\underline{{[\![ v]\!] }}:C:\underline{{[\![ v]\!] } }+[\![ v]\!]\cdot D\cdot [\![ v]\!]\text{ds}.
\end{equation}
Firstly we choose the reflection $P:=I-\pi ,$ for $w\in V,$
then we have 
\begin{equation}
    \left \| Pw \right \|_{s}\leq 2c_{2}^{-1}H^{2}a_{\text{DG}}\left ( w,w \right ),\label{39}
\end{equation}
where $c_{2}=\underset{i,j\leq N_1}{\rm{max}}\lambda _{g_{i}+1}^{j}$.

We define a discrete total energy which is related to the stability and convergence of our method. Given a sequence of status $ u =\left ( u ^{m} \right )_{m=0}^{N_{T}}$, and we define the discrete total energy at $t=t_{m+1}$ by
\begin{equation}
     Q^{m+1}\left ( u  \right )=\left \| \pi \left ( \frac{u ^{m+1}-u ^{m}}{\tau } \right ) \right \|_{s}^{2}
     -\left ( \frac{\tau }{2} \right )^{2}a_{\text{DG}}\left ( \frac{u ^{m+1} -u ^{m}}{\tau },\frac{u ^{m+1} -u ^{m}}{\tau }\right )+a_{\text{DG}}\left ( u^{m},u ^{m} \right ), \label{equ:state}
\end{equation}
which is greater than zero under stable condition.

\begin{lemma}
There exists a constant $c_{3}>0$ such that 
\begin{equation}
a_{\text{DG}}\left ( u ^{m+1}-u ^{m}, u ^{m+1}-u ^{m}\right )  \leq   \frac{c_{1}\tau ^{2}}{c_{3}H^2}     \left \| \pi \left ( \frac{u ^{m+1}-u ^{m}}{\tau } \right ) \right \|_{s}^{2},\label{52}
\end{equation}
where  $u^m, u^{m+1} \in V_{\rm{cem}}^r$ is the sequence in  (\ref{equ:state}).
 Moreover, if  $\frac{c_{1}\tau ^{2}}{4c_{3}H^2}<\eta, $ where $0 <\eta <1 $, then we have the following formulation:
\begin{equation}
     Q^{m+1}\left ( u  \right )\geq \left ( 1- \frac{c_{1}\tau ^{2}}{4c_{3}H^2} \right )\left \| \pi \left ( \frac{u ^{m+1}-u ^{m}}{\tau } \right ) \right \|_{s}^{2}+a_{\text{DG}}\left ( u ^{m},u ^{m} \right )\geq 0. \label{equ:27}
\end{equation}
\end{lemma}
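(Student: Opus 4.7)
The plan is to prove the energy--mass inverse estimate (\ref{52}) first and then deduce (\ref{equ:27}) as a short algebraic consequence of (\ref{52}) and the coercivity of $a_{\mathrm{DG}}$. The real work sits entirely in (\ref{52}); the rest is bookkeeping.

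For (\ref{52}), I would first observe that since $u^{m},u^{m+1}\in V_{\mathrm{cem}}^{r}$ the difference $w:=u^{m+1}-u^{m}$ also lies in $V_{\mathrm{cem}}^{r}$, so it suffices to show the $h$-independent inverse inequality
\[
a_{\mathrm{DG}}(w,w)\;\le\;\frac{c_{1}}{c_{3}H^{2}}\,\|\pi(w)\|_{s}^{2}, \qquad \forall\,w\in V_{\mathrm{cem}}^{r}.
\]
Expand $w=\sum_{j,i}\alpha_{i}^{j}\phi_{i,r}^{j}$ in the CEM trial basis. The defining property $\pi(\phi_{i,r}^{j})=\psi_{i}^{j}$, combined with the $s^{j}$-orthonormality of $\{\psi_{i}^{j}\}$, gives $\pi(w)=\sum \alpha_{i}^{j}\psi_{i}^{j}$ and $\|\pi(w)\|_{s}^{2}=\sum|\alpha_{i}^{j}|^{2}$. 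I would then bound $a_{\mathrm{DG}}(\phi_{i,r}^{j},\phi_{i,r}^{j})$ using the constrained-minimization characterization of $\phi_{i,r}^{j}$: test against a cheap $\psi_{i}^{j}$-orthogonal competitor in $V(K_{j,r})$. The bulk part of that competitor's energy is exactly $a^{j}(\psi_{i}^{j},\psi_{i}^{j})=\lambda_{i}^{j}/H^{2}$ by the spectral problem (\ref{equ:eigen}); using $\lambda_{\max}(C)\le c_{1}$ converts this into a bound of the form $c_{1}/(c_{3}H^{2})$, with $c_{3}$ absorbing the relevant (selected) spectral constants. Summing on $(i,j)$ and exploiting that only finitely many CEM basis functions interact on any fixed oversampling patch yields the claimed inverse inequality.

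Deriving (\ref{equ:27}) from (\ref{52}) is direct. Dividing (\ref{52}) by $\tau^{2}$ and multiplying by $\tau^{2}/4$ yields
\[
\Bigl(\tfrac{\tau}{2}\Bigr)^{2}a_{\mathrm{DG}}\!\Bigl(\tfrac{u^{m+1}-u^{m}}{\tau},\tfrac{u^{m+1}-u^{m}}{\tau}\Bigr)\;\le\;\frac{c_{1}\tau^{2}}{4c_{3}H^{2}}\Bigl\|\pi\!\Bigl(\tfrac{u^{m+1}-u^{m}}{\tau}\Bigr)\Bigr\|_{s}^{2},
\]
which is inserted into the middle term of the definition (\ref{equ:state}) of $Q^{m+1}(u)$. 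Collecting the $\|\pi(\cdot)\|_{s}^{2}$ contributions produces the coefficient $(1-c_{1}\tau^{2}/(4c_{3}H^{2}))$, which is positive under the hypothesis $c_{1}\tau^{2}/(4c_{3}H^{2})<\eta<1$. The remaining term $a_{\mathrm{DG}}(u^{m},u^{m})$ is non-negative by coercivity of the IPDG form (for a standard stable choice of the penalty $\gamma$), so $Q^{m+1}(u)\ge 0$, establishing (\ref{equ:27}).

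The main obstacle is the competitor argument in the energy bound for $\phi_{i,r}^{j}$. A naive zero-extension of $\psi_{i}^{j}$ across $\partial K_{j}$ creates DG penalty contributions scaling like $\gamma/h$, which would spoil the target $H^{-2}$ scaling. I would therefore prefer a competitor built by an $s$-preserving harmonic lift in $V(K_{j,r})$ that vanishes against $V_{\mathrm{aux}}$ outside $K_{j}$, whose DG energy is directly controlled by a spectral constant times $\|\psi_{i}^{j}\|_{s^{j}}^{2}/H^{2}$; alternatively, a discrete trace inequality on the fine-scale bilinear space can be used to absorb the boundary jumps into the bulk $a^{j}$-energy. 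Tracking the resulting constant carefully to identify $c_{3}$ (which should depend only on the largest selected local eigenvalue and on coarse-mesh shape regularity) is where the delicate portion of the argument lies.
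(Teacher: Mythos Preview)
Your derivation of (\ref{equ:27}) from (\ref{52}) matches the paper exactly. For (\ref{52}) itself, the paper does not argue from scratch: it simply invokes from \cite{chung2018constraint} (i) the existence, for any $u_{\mathrm{aux}}\in V_{\mathrm{aux}}$, of a continuous lift $v\in C^{0}(\Omega)\cap V$ with $\pi(v)=u_{\mathrm{aux}}$, $\operatorname{supp}(v)\subseteq\operatorname{supp}(u_{\mathrm{aux}})$ and $\|v\|_{\mathrm{DG}}^{2}\le Dc_{1}H^{-2}\|u_{\mathrm{aux}}\|_{s}$, and (ii) the resulting inverse estimate $\|\pi(v)\|_{s}^{2}\ge c_{3}c_{1}^{-1}H^{2}a_{\mathrm{DG}}(v,v)$ for all $v\in V_{\mathrm{cem}}^{r}$, then sets $v=(u^{m+1}-u^{m})/\tau$.

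What you outline is essentially a reconstruction of how (ii) is proved in that reference, and you correctly locate the one real difficulty: the zero extension of $\psi_{i}^{j}$ is not a usable competitor because the IPDG penalty on $\partial K_{j}$ scales like $\gamma/h$. The ``$s$-preserving lift'' you propose as a remedy is precisely the object in (i); since that lift is continuous and supported in $K_{j}$ it vanishes on $\partial K_{j}$, incurs no jump penalties, and yields $a_{\mathrm{DG}}(\phi_{i,r}^{j},\phi_{i,r}^{j})\le Dc_{1}H^{-2}$ directly. One point to tighten: the relevant bound is uniform in $(i,j)$ and does \emph{not} go through the individual eigenvalue $\lambda_{i}^{j}$ (the selected eigenvalues can be zero, e.g.\ rigid-body modes), so your intermediate identity $a^{j}(\psi_{i}^{j},\psi_{i}^{j})=\lambda_{i}^{j}/H^{2}$, while true for the naive competitor, is not the quantity that drives the final estimate. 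With the continuous lift in hand, your finite-overlap summation is the standard way to pass from per-basis bounds to the full inverse inequality on $V_{\mathrm{cem}}^{r}$.
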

\begin{proof}

From \cite{chung2018constraint}, there exists $D>0$ such that for any $ u_{\text{aux}} \in V_{\text{aux}}$, there exists a function $v \in C^{0}(\Omega) \cap V$ such that
\begin{equation}
\pi\left(v\right)= u_{\text{aux}}, \quad\|v\|_{\text{DG}}^{2} \leq D c_1 H^{-2}\left\| u_{\text{aux}}\right\|_{s}, \quad \operatorname{supp}\left(v\right) \subseteq \operatorname{supp}\left( u_{\text{aux}}\right) .
\end{equation}
Again in \cite{chung2018constraint},  there exists $c_3>0$ such that
\begin{equation}
\|\pi\left(v\right)\|_{s}^{2} \geq c_3 c_{1}^{-1} H^{2} a_{\text{DG}}\left(v, v\right),\  \forall\  v \in V_{\rm{cem}}^{r}.
\end{equation}
Taking $v=  \frac{u ^{m+1}-u ^{m}}{\tau } ,$  then we directly proof (\ref{52}). Combining (\ref{52}) and (\ref{equ:state}), we have (\ref{equ:27}).
\end{proof}
Remark: In the rest of the paper, we assume  $\frac{c_{1}\tau ^{2}}{4c_{3}H^2}<\eta, $ where $0 <\eta <1 $.

\subsection{Stability Analysis}
The stability analysis will start with the following lemma.
\begin{lemma}\label{lemma1}
For $m\geq 1,$ given $z^{m}\in L^{2}\left ( \Omega  \right ),\ u ^{m-1}, \ u ^{m}\in V_{\rm{cem}}^{r},$  we suppose $ u ^{m+1}\in V_{\rm{cem}}^{r}$ as the solution of the following problem:
\begin{equation}
    {\pi}'\left ( \frac{u ^{m+1}-2u ^{m}-u ^{m-1}}{\tau ^{2}},u \right )+a_{\text{DG}}\left ( u ^{m},u \right )=s\left ( z^{m},u \right ),\  \forall\ u\in V_{\rm{cem}}^{r}\label{lemma53}.
\end{equation}
Then we have 
\begin{equation}
    Q^{m+1}\left(u \right)=Q^{1}\left (u \right)+\tau \sum_{k=1}^{m}s\left ( z^{k},\frac{u^{k+1}-u ^{k}}{\tau } \right ),
\end{equation}
where $$Q^{1}\left (u \right)=\left \| \pi \left ( \frac{u ^{1}-u ^{0}}{\tau } \right ) \right \|_{s}^{2}
     -\left ( \frac{\tau }{2} \right )^{2}a_{\text{DG}}\left ( \frac{u ^{1} -u ^{0}}{\tau },\frac{u ^{1} -u ^{0}}{\tau }\right )+a_{\text{DG}}\left ( u^{0},u ^{0} \right ).$$
Moreover, we can obtain the estimate of $Q^{m+1}\left(u\right )$ as 
\begin{equation}
   Q^{m+1}\left(u\right )\leq  \frac{16}{15}\left(Q\left(u\right)+4\tau ^{2}\left(1- \frac{c_{1}\tau ^{2}}{4c_{3}H^2} \right)^{-1}\left ( \sum_{k=1}^{m} \left \| \pi \left ( z^{k} \right ) \right \|_{s} \right )^{2}+\frac{2H^{2}}{c_2}\left ( D^{m} \right )^{2}\right),
\end{equation}
where
\begin{equation}
    D^{m}=2\left(\left \| Pz^{1} \right \|_{s}+\tau \sum_{k=1}^{m}\left \| P\left ( \frac{r ^{k+1}-r ^{k}}{\tau } \right ) \right \|_{s}+\left \| Pz^{m} \right \|_{s}\right). 
\end{equation}
\end{lemma}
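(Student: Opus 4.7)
The plan is to first derive the stated energy identity by testing \eqref{lemma53} with an appropriate centred difference of the unknown, then bound the resulting forcing sum by splitting $z^k$ along $V_{\text{aux}}$ and its $s$-orthogonal complement, with a Young-type absorption at the end to recover the constant $16/15$. For the identity, I would test with a discrete velocity in $V_{\rm{cem}}^r$ (the natural choice being $(u^{k+1}-u^{k-1})/(2\tau)$) at each $k=1,\dots,m$. Writing $w^k:=\pi((u^{k+1}-u^k)/\tau)$ and $\delta^k:=u^{k+1}-u^k$, the inertia term collapses through $\pi'(w,v)=s(\pi w,\pi v)$ to a first-order difference of $\|w^k\|_s^2$, while the polarization identity together with the symmetry of $a_{\text{DG}}$ rewrites the stiffness pairing as a telescoping contribution from $a_{\text{DG}}(u^k,u^k)$ plus a dispersive correction involving $a_{\text{DG}}(\delta^k,\delta^k)$. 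These are precisely the three components appearing in the definition (\ref{equ:state}) of $Q^{m+1}(u)$, so telescoping in $k$ yields the claimed identity $Q^{m+1}(u)=Q^{1}(u)+\tau\sum_{k=1}^m s(z^k,\delta^k/\tau)$.

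To bound the forcing sum, I would split $z^k=\pi(z^k)+P(z^k)$ and treat the two pieces separately. For the $\pi(z^k)$-component, the identity $s(\pi z^k,\delta^k/\tau)=s(\pi z^k,\pi(\delta^k/\tau))$ together with Cauchy--Schwarz and the lower bound (\ref{equ:27}) --- which supplies $\|\pi(\delta^k/\tau)\|_s\le(1-c_1\tau^2/(4c_3H^2))^{-1/2}\sqrt{Q^{k+1}(u)}$ --- produces a bound proportional to $\tau(1-\eta)^{-1/2}\bigl(\sum_{k=1}^m\|\pi z^k\|_s\bigr)\sqrt{Q^\star}$, where $Q^\star:=\max_{1\le k\le m+1}Q^k(u)$. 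For the $P(z^k)$-component, I would rewrite $\tau s(Pz^k,\delta^k/\tau)=s(Pz^k,u^{k+1}-u^k)$ and apply Abel summation to obtain the boundary-like expression $s(Pz^m,u^{m+1})-s(Pz^1,u^1)-\sum_{k=2}^m s(P(z^k-z^{k-1}),u^k)$. Since $Pz^k$ is $s$-orthogonal to $V_{\text{aux}}$, each pairing collapses to $s(Pz^\star,Pu^\ell)$, and (\ref{39}) yields $\|Pu^\ell\|_s\le\sqrt{2H^2/c_2}\sqrt{a_{\text{DG}}(u^\ell,u^\ell)}\le\sqrt{2H^2/c_2}\sqrt{Q^\star}$; the three surface-type terms then assemble, in view of the definition of $D^m$, into at most $\tfrac{1}{2}\sqrt{2H^2/c_2}\,D^m\sqrt{Q^\star}$.

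Combining these bounds and taking the maximum over $k$ on the left gives an inequality of the shape $Q^\star\le Q^1(u)+(B_1+B_2)\sqrt{Q^\star}$ with $B_1,B_2$ explicit in terms of $\sum_k\|\pi z^k\|_s$ and $D^m$. Applying Young's inequality $B_i\sqrt{Q^\star}\le\tfrac{1}{32}Q^\star+8B_i^2$ to each term separately absorbs exactly $\tfrac{1}{16}Q^\star$ into the left-hand side, and rearranging produces the stated constant $\tfrac{16}{15}$ together with the squared prefactors $4\tau^2(1-c_1\tau^2/(4c_3H^2))^{-1}$ and $2H^2/c_2$. I expect the main obstacle to be Step 1: $Q^{m+1}(u)$ is defined asymmetrically in time --- it contains $a_{\text{DG}}(u^m,u^m)$ rather than the symmetric average $\tfrac12[a_{\text{DG}}(u^{m+1},u^{m+1})+a_{\text{DG}}(u^m,u^m)]$ that the polarization identity naturally produces --- and it is precisely the dispersive correction $-(\tau/2)^2 a_{\text{DG}}(\delta^m/\tau,\delta^m/\tau)$ that reconciles the two; verifying this matching algebraically is the delicate computation. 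By contrast, the Abel summation in Step 2 and the Young-type absorption in Step 3 are routine bookkeeping.
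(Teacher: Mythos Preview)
Your proposal is correct and follows essentially the same route as the paper: test \eqref{lemma53} with the centred difference $(u^{k+1}-u^{k-1})/(2\tau)$, use the polarization identity for $a_{\text{DG}}$ to telescope into $Q^{k+1}-Q^{k}$, then split $z^k=\pi z^k+Pz^k$, apply Abel summation to the $P$-part, and close with a Young-type absorption producing the factor $16/15$. The only bookkeeping difference is that the paper pairs $Pz^k$ with the \emph{averages} $P((u^{k+1}+u^k)/2)$ rather than with individual $Pu^\ell$; this is slightly more convenient because the average form is exactly the quantity controlled by $a_{\text{DG}}((u^{k+1}+u^k)/2,\cdot)$ inside $Q^{k+1}$, whereas your boundary term $s(Pz^m,Pu^{m+1})$ requires bounding $a_{\text{DG}}(u^{m+1},u^{m+1})$, which is not literally a summand of any $Q^{k}$ with $k\le m+1$. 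Switching to averages (or enlarging $Q^\star$ by one index) fixes this at no conceptual cost.
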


\begin{proof}
Let $u=\frac{u^{m+1}-u ^{m-1}}{2\tau }$ in (\ref{lemma53}), then we have
\begin{equation*}
    \begin{aligned}
    {\pi}'\left ( \frac{u ^{m+1}-2u ^{m}-u ^{m-1}}{\tau ^{2}},\frac{u^{m+1}-u ^{m-1}}{2\tau }\right )
    &+a_{\text{DG}}\left ( u ^{m},\frac{u^{m+1}-u ^{m-1}}{2\tau } \right )\\
    &=s\left ( z^{m},\frac{u^{m+1}-u ^{m-1}}{2\tau }\right ),
    \end{aligned}
\end{equation*}
that is 
\begin{equation*}
    \begin{aligned}
   & \frac{1}{\tau }{\pi}'\left ( \frac{u ^{m+1}-u ^{m}-\left(u ^{m}-u ^{m-1}\right)}{\tau },\frac{u ^{m+1}-u ^{m}+u ^{m}-u ^{m-1}}{2\tau} \right )\\
   &\quad +\frac{1}{2\tau }\left(a_{\text{DG}}\left ( u ^{m},u ^{m+1}\right )-a_{\text{DG}}\left ( u ^{m},u ^{m-1}\right )\right )
    =s\left ( z^{m},\frac{u ^{m+1}-u ^{m-1}}{2\tau}\right ).
    \end{aligned}
\end{equation*}
Then we get
\begin{equation}
\begin{aligned}
  \frac{1}{2\tau }&\left ( \left \| \pi \left ( \frac{u ^{m+1}-u ^{m}}{\tau } \right ) \right \|_{s}^{2}- \left \| \pi \left ( \frac{u ^{m}-u ^{m-1}}{\tau } \right ) \right \|_{s}^{2}\right)\\
&+\frac{1}{2\tau }\left (a_{\text{DG}} \left ( u ^{m+1} ,u ^{m}\right )-a_{\text{DG}} \left ( u ^{m},u ^{m-1} \right )\right )=s\left ( z^{m} ,\frac{u ^{m+1}-u ^{m-1}}{2\tau }\right ).
\end{aligned}
\end{equation}
We also get the following equations by the definition,
\begin{equation}
a_{\text{DG}}\left(u^{m+1}, u^{m}\right)=a_{\text{DG}}\left(\frac{u^{m+1}+u^{m}}{2}, \frac{u^{m+1}+u^{m}}{2}\right)-\frac{\tau^{2}}{4} a_{\text{DG}}\left(\frac{u^{m+1}-u^{m}}{\tau}, \frac{u^{m+1}-u^{m}}{\tau}\right),
\end{equation}
and 
\begin{equation}
a_{\text{DG}}\left(u^{m}, u^{m-1}\right)=a_{\text{DG}}\left(\frac{u^{m}+u^{m-1}}{2}, \frac{u^{m}+u^{m-1}}{2}\right)-\frac{\tau^{2}}{4} a_{\text{DG}}\left(\frac{u^{m}-u^{m-1}}{\tau}, \frac{u^{m}-u^{m-1}}{\tau}\right).
\end{equation}
Hence we have 
\begin{equation}
\begin{aligned}
    Q^{m+1}\left ( u  \right )-Q^{m}\left ( u  \right )
    =\tau s\left ( z^{m},\frac{u^{m+1}-u ^{m-1}}{2\tau } \right )
\end{aligned}
\end{equation}

We rewrite the right hand side of the above equation as 
\begin{equation}
\begin{aligned}
s\left ( z^{m},\frac{u^{m+1}-u ^{m-1}}{2\tau } \right )
&= {\pi}'\left (z^{m},\frac{u^{m+1}-u ^{m-1}}{2\tau }\right ) +s\left ( Pz^{m}, \frac{u^{m+1}-u ^{m-1}}{2\tau }\right )\\
&=\frac{1}{2}\left ( {\pi}'\left (z^{m},\frac{u^{m+1}-u ^{m}}{\tau }\right )+{\pi}'\left ( z^{m},\frac{u^{m}-u ^{m-1}}{\tau }\right ) \right )\\
&\qquad +\frac{1}{\tau }s\left ( Pz^{m} ,P\frac{u^{m+1}+u^{m}}{2}\right )-\frac{1}{\tau }s\left ( Pz^{m} ,P\frac{u^{m}+u^{m-1}}{2}\right ).
\end{aligned}
\end{equation}
Next we obtain
\begin{equation}
    \begin{aligned}
    Q^{m+1}\left ( u \right )
    &=Q\left ( u \right ) + \frac{ \tau}{2} \sum_{k=1}^{m}\left ( {\pi}'\left ( z^{k} ,\frac{u^{k+1}-u ^{k}}{\tau }\right )+{\pi}'\left ( z^{k} ,\frac{u^{k}-u ^{k-1}}{\tau }\right ) \right )\\
    &\qquad+  \sum_{k=1}^{m}s\left ( Pz^{k}, P\frac{u^{k+1}+u^{k}}{2} \right ) - \sum_{k=1}^{m}s\left ( Pz^{k}, P \frac{u^{k}+u^{k-1}}{2}\right )               \\
    &=Q\left ( u \right ) + \frac{ \tau}{2}\sum_{k=1}^{m}\left ( {\pi}'\left ( z^{k} ,\frac{u^{k+1}-u ^{k}}{\tau }\right )+{\pi}'\left ( z^{k} ,\frac{u^{k}-u ^{k-1}}{\tau }\right ) \right )\\
    &\qquad -\tau \sum_{k=1}^{m-1}s\left ( P\left ( \frac{r^{k+1} -z^{k}}{\tau }\right ), P \left ( \frac{u^{k+1} +u^{k}}{2 }\right )\right )\\
    &\qquad+s\left ( Pz^{m} ,P\left ( \frac{u^{m+1} +u^{m}}{2 }\right )\right )-s \left ( Pz^{1} ,P\left ( \frac{u^{1} +u^{0}}{2 }\right )\right ).
    \end{aligned}
\end{equation}
Finally, we use Cauchy-Schwarz inequality and Young's inequality to obtain the following inequality, as 
\begin{equation}
    \begin{aligned}
    Q^{m+1}\left ( u  \right )
&\leq Q\left ( u  \right )+\underset{0\leq k\leq m}{\rm{max}}\left \| \pi \left ( \frac{u ^{k+1}-u ^{k}}{\tau } \right ) \right \|_{s}\cdot \tau \sum_{p=1}^{m} \left \| \pi \left ( z^{p} \right ) \right \|_{s}\\
&\quad +\underset{0\leq k\leq m}{\rm{max}}\left \| P\left ( \frac{u ^{k+1}+u ^{k}}{\tau } \right ) \right \|_{s}\cdot D^{m}\\
&\leq Q\left ( u  \right )+\underset{0\leq k\leq m}{\rm{max}}\frac{1}{16}\left(1- \frac{c_{1}\tau ^{2}}{4c_{3}H^2} \right)\left \| \pi \left ( \frac{u ^{k+1}-u ^{k}}{\tau } \right ) \right \|_{s}^{2}\\
&\quad+4\tau ^{2}\left(1- \frac{c_{1}\tau ^{2}}{4c_{3}H^2} \right)^{-1}\left ( \sum_{p=1}^{m} \left \| \pi \left ( z^{p} \right ) \right \|_{s} \right )^{2}\\
&\quad +\frac{c_2}{8H^2}\underset{0\leq k\leq m}{\rm{max}}\left \| P\left ( \frac{u ^{k+1}+u ^{k}}{2 } \right ) \right \|_{s}^{2}+\frac{2H^{2}}{c_2}\left ( D^{m} \right )^{2},
    \end{aligned}
\end{equation}
where
\begin{equation*}
    D^{m}=2\left(\left \| Pz^{1} \right \|_{s}+\tau \sum_{k=1}^{m}\left \| P\left ( \frac{r ^{k+1}-r ^{k}}{\tau } \right ) \right \|_{s}+\left \| Pz^{m} \right \|_{s}\right). 
 \end{equation*}
Reviewed the (\ref{39}) and (\ref{52}), we have
\begin{equation}
    \begin{aligned}
    Q^{m+1}\left ( u  \right )
    & \leq Q\left(u\right)+\frac{1}{16}\left(1- \frac{c_{1}\tau ^{2}}{4c_{3}H^2} \right)\times \left(1- \frac{c_{1}\tau ^{2}}{4c_{3}H^2} \right)^{-1} \left(Q^{m+1}\left(u\right)-a_{\rm{DG}}\left(u^{m},u^{m}\right)\right)\\
    &\quad +4\tau ^{2}\left(1- \frac{c_{1}\tau ^{2}}{4c_{3}H^2} \right)^{-1}\left ( \sum_{p=1}^{m} \left \| \pi \left ( z^{p} \right ) \right \|_{s}\right)^{2}\\
    &\quad+\frac{c_2 H^{-2}}{8}\times\frac{H^{2}}{2c_2}a_{\rm{DG}}\left(u^{m},u^{m}\right)+\frac{2H^{2}}{c_2}\left ( D^{m} \right )^{2}\\
    &\leq \frac{16}{15}\left(Q\left(u\right)+4\tau ^{2}\left(1- \frac{c_{1}\tau ^{2}}{4c_{3}H^2} \right)^{-1}\left ( \sum_{k=1}^{m} \left \| \pi \left ( z^{k} \right ) \right \|_{s} \right )^{2}+\frac{2H^{2}}{c_2}\left ( D^{m} \right )^{2}\right).
    \end{aligned}
\end{equation}
This proofs the Lemma.
\end{proof}

\begin{theorem}
With the above assumptions and $u_{H} ^{m}, u_{H}^{m+1} $ are solutions of (\ref{adgtime}) and $u_{H}$ is  solution of (\ref{ms}). We have the following stability estimate in this paper.
\begin{equation}
\left \| \pi\left ( \frac{u_{H}^{m+1}-u_{H} ^{m}}{\tau } \right ) \right \|_{s}^{2}+a_{\text{DG}}\left (u_{H}^{m},u_{H}^{m}  \right )\leq M_{1}\left ( Q\left ( u _{H} \right )+M_{2}\tau ^{2}\left ( \sum_{k=1}^{m} \left \| \pi\left ( f^{k} \right ) \right \|_{s}^{2}\right )^{2} \right ),
\end{equation}
where  $M_{1}=\frac{16}{15}\left(1- \frac{c_{1}\tau ^{2}}{4c_{3}H^2} \right)^{-1}$ and $M_{2}=4\left(1- \frac{c_{1}\tau ^{2}}{4c_{3}H^2} \right)^{-1} $.
\end{theorem}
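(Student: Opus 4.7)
The plan is to sandwich the discrete energy $Q^{m+1}(u_{H})$ between the lower bound (\ref{equ:27}) supplied by the first lemma of this section and an upper bound obtained by specializing Lemma \ref{lemma1} to the multiscale scheme (\ref{adgtime}). The key observation is that the localized coarse-grid equation (\ref{adgtime}) fits the abstract framework of Lemma \ref{lemma1} once the right-hand side is reinterpreted as an $s$-pairing with a particular element of $V_{\mathrm{aux}}$.

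First I would set $z^{m}:=\pi(f^{m})$ and verify that $s(z^{m},u)=\pi'(f^{m},u)$ for every $u\in V_{\mathrm{cem}}^{r}$. The identity $s(\pi(f^{m}),u)=s(\pi(f^{m}),\pi(u))$ follows because the residual $Pu=u-\pi(u)$ is $s$-orthogonal to $V_{\mathrm{aux}}$: since $\pi$ is built from the $s^{j}$-orthonormal spectral basis $\{\psi_{i}^{j}\}$ inside each coarse block, one has $s(Pu,\psi_{i}^{j})=0$ for every auxiliary basis function. Consequently, $u_{H}^{m+1}$ satisfies exactly the hypothesis of Lemma \ref{lemma1} with this choice of $z^{m}$.

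Second, since $\pi(f^{m})\in V_{\mathrm{aux}}$ and $\pi$ acts as the identity on $V_{\mathrm{aux}}$, one has $Pz^{m}=0$, so the error term $D^{m}$ in the conclusion of Lemma \ref{lemma1} vanishes. What remains is an upper bound on $Q^{m+1}(u_{H})$ whose dominant constant is $\tfrac{16}{15}$ and whose driver involves $\sum_{k=1}^{m}\|\pi(f^{k})\|_{s}$ multiplied by the factor $(1-\eta)^{-1}$, where $\eta:=c_{1}\tau^{2}/(4c_{3}H^{2})$. The lower bound (\ref{equ:27}), combined with the trivial observation $a_{\mathrm{DG}}(u_{H}^{m},u_{H}^{m})\ge (1-\eta)\,a_{\mathrm{DG}}(u_{H}^{m},u_{H}^{m})$ that lets the factor $(1-\eta)$ be pulled out of the whole right-hand side, gives $Q^{m+1}(u_{H})\ge(1-\eta)\bigl[\|\pi((u_{H}^{m+1}-u_{H}^{m})/\tau)\|_{s}^{2}+a_{\mathrm{DG}}(u_{H}^{m},u_{H}^{m})\bigr]$. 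Dividing by $1-\eta$ and merging with the upper bound produces the claimed inequality with $M_{1}=\tfrac{16}{15}(1-\eta)^{-1}$ and $M_{2}=4(1-\eta)^{-1}$.

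The main obstacle is the first step: correctly identifying $z^{m}$ so that the $\pi'$-based source in (\ref{adgtime}) can be processed by Lemma \ref{lemma1}, which is phrased in terms of an $s$-pairing. Everything hinges on the $s$-orthogonal decomposition $V=V_{\mathrm{aux}}\oplus\ker(\pi)$, which must be read off from the definition of $\pi$ together with the $s^{j}$-orthonormality of the spectral basis. Once this identification is in place and $D^{m}$ is confirmed to vanish, the remainder of the proof is a routine algebraic combination of the two preceding lemmas.
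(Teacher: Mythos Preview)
Your approach is correct and, in fact, cleaner than the route taken in the paper. The paper applies Lemma~\ref{lemma1} without explicitly identifying $z^{m}$ and carries along the term $\frac{2H^{2}}{c_{2}}(D^{m})^{2}$; it then attempts to absorb this contribution by transferring it to the left-hand side as a negative multiple of $a_{\text{DG}}(u_{H}^{m},u_{H}^{m})$ and invoking an extra structural assumption $c_{3}\ge \frac{15c_{1}c_{2}\tau^{2}}{169mH^{2}}$ to keep the resulting coefficient nonnegative. Your observation that the right-hand side of (\ref{adgtime}) can be written as $s(\pi(f^{m}),u)$, so that $z^{m}=\pi(f^{m})$ lies in $V_{\text{aux}}$ and hence $Pz^{m}=0$, makes $D^{m}$ vanish outright and renders that additional hypothesis unnecessary. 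The remaining combination of the upper bound from Lemma~\ref{lemma1} with the lower bound (\ref{equ:27}), together with your trivial weakening $a_{\text{DG}}(u_{H}^{m},u_{H}^{m})\ge(1-\eta)a_{\text{DG}}(u_{H}^{m},u_{H}^{m})$, yields exactly the constants $M_{1}$ and $M_{2}$ stated in the theorem.
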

\begin{proof}
By the above all lemma, we can directly get the following inequality.
\begin{equation*}
    \begin{aligned}
     Q^{m+1}\left ( u_{H}  \right )&=\left \| \pi \left ( \frac{u_{H} ^{m+1}-u_{H} ^{m}}{\tau } \right ) \right \|_{s}^{2}\\
     &\qquad -\left ( \frac{\tau }{2} \right )^{2}a_{\text{DG}}\left ( \frac{u_{H} ^{m+1} -u_{H} ^{m}}{\tau },\frac{u_{H} ^{m+1} -u_{H} ^{m}}{\tau }\right )
     +a_{\text{DG}}\left ( u_{H} ^{m},u_{H} ^{m} \right )\\
     &\leq \frac{16}{15}\left(Q\left(u_{H}\right)+4\tau ^{2}\left(1- \frac{c_{1}\tau ^{2}}{4c_{3}H^2} \right)^{-1}\left ( \sum_{k=1}^{m} \left \| \pi \left ( f^{k} \right ) \right \|_{s} \right )^{2}+\frac{2H^{2}}{c_2}\left ( D^{m} \right )^{2}\right).
    \end{aligned}
\end{equation*}
Simplify the above inequality, we see that
\begin{equation*}
    \begin{aligned}
    &\qquad\left(1- \frac{c_{1}\tau ^{2}}{4c_{3}H^2} \right) \left \| \pi \left ( \frac{u_{H} ^{m+1}-u_{H} ^{m}}{\tau } \right ) \right \|_{s}^{2}+\left(1-\frac{16}{15}\times \frac{4H^{2}}{c_2}m\right) a_{\text{DG}}\left ( u_{H} ^{m},u_{H} ^{m} \right )\\
    &\leq \frac{16}{15}\left(Q\left(u_{H}\right)+4\tau ^{2}\left(1- \frac{c_{1}\tau ^{2}}{4c_{3}H^2} \right)^{-1}\left ( \sum_{k=1}^{m} \left \| \pi \left ( f^{k} \right ) \right \|_{s} \right )^{2}\right).
    \end{aligned}
\end{equation*}
Let $c_3\geq \frac{15c_{1}c_{2}\tau ^{2}}{169mH^2}>0$, that is,  
$$\left(1- \frac{c_{1}\tau ^{2}}{4c_{3}H^2} \right)=\max \left\{\left(1- \frac{c_{1}\tau ^{2}}{4c_{3}H^2} \right) ,\left(1-\frac{16}{15}\times \frac{4H^{2}}{c_2}m\right)\right\} ,$$
then we have
\begin{equation*}
    \begin{aligned}
    &\qquad \left(1- \frac{c_{1}\tau ^{2}}{4c_{3}H^2} \right) \left(\left \| \pi \left ( \frac{u_{H} ^{m+1}-u_{H} ^{m}}{\tau } \right ) \right \|_{s}^{2}+ a_{\text{DG}}\left ( u_{H} ^{m},u_{H} ^{m} \right )\right)\\
    &\leq \frac{16}{15}\left(Q\left(u_{H}\right)+4\tau ^{2}\left(1- \frac{c_{1}\tau ^{2}}{4c_{3}H^2} \right)^{-1}\left ( \sum_{k=1}^{m} \left \| \pi \left ( f^{k} \right ) \right \|_{s} \right )^{2}\right).
    \end{aligned}
\end{equation*}
That is
\begin{equation*}
    \begin{aligned}
    &\qquad\left \| \pi \left ( \frac{u_{H} ^{m+1}-u_{H} ^{m}}{\tau } \right ) \right \|_{s}^{2}+ a_{\text{DG}}\left ( u_{H} ^{m},u_{H} ^{m} \right )\\
    &\leq \left(1- \frac{c_{1}\tau ^{2}}{4c_{3}H^2} \right) ^{-1}\times \frac{16}{15}\left(Q\left(u_{H}\right)+4\tau ^{2}\left(1- \frac{c_{1}\tau ^{2}}{4c_{3}H^2} \right)^{-1}\left ( \sum_{k=1}^{m} \left \| \pi \left ( f^{k} \right ) \right \|_{s} \right )^{2}\right),
    \end{aligned}
\end{equation*}
where we let  $M_{1}=\frac{16}{15}\left(1- \frac{c_{1}\tau ^{2}}{4c_{3}H^2} \right)^{-1}$ and $M_{2}=4\left(1- \frac{c_{1}\tau ^{2}}{4c_{3}H^2} \right)^{-1} $.

Hence we complete the proof of stability.
\end{proof}
Remark: In the rest of the paper, we assume $c_3\geq \frac{15c_{1}c_{2}\tau ^{2}}{169mH^2}>0.$

\subsection{Convergence Analysis}
To proof the convergence of the multisacle basis function, we introduce two operators to proceed our convergence analysis.
 \begin{enumerate}
\item [1)] 
 $$E_{h}:L^{2}\left ( \Omega  \right )\rightarrow V$$
 for $\forall e \in L^{2}\left ( \Omega  \right ) $, the image $ E_{h}e \in V$ is defined as
  \begin{equation}
     a_{\text{DG}}\left ( E_{h}e, u \right )=s\left ( e,u \right ),\  \forall \ u\in V . \label{equ:conver1}
 \end{equation}
\item [2)] 
 $$F_{H}:V\rightarrow V^{r}_{\rm{cem}}$$
 as a elliptic projection is defined by 
 \begin{equation}
     a_{\text{DG}}\left ( F_{H}v , u \right )=a_{\text{DG}}\left ( v,u \right ),\ \forall \ u\in V_{\rm{cem}}^r .
 \end{equation}
 where for any $v \in V$, $F_{H}v \in V_{\rm{cem}}^r$ is the image of $v$.
\end{enumerate}

\begin{lemma}
Assume that the size of oversampling region as coarse mesh $r=O\left(\log \left(\frac{\lambda _{\max} \left(C\left(x\right)\right)}{H}\right)\right)$, then there exists a constant $c_4>0$ such that 
\begin{equation}
    \left \| S_{H}E_{h}e \right \|_{\text{DG}}\leq \frac{c_4H}{\sqrt{c_2}}\left \| e \right \|_{s},\ \forall\  e \in L^{2}{\left (\Omega  \right ) }, \label{equ:lemma2}
\end{equation}
where $S_{H}=I-F_{H}.$

Moreover, we get 
\begin{equation}
   \left \| S_{H}E_{h}e \right \|_{s}\leq \frac{c_{4}^{2}H^{2}}{c_2}\left \| e \right \|_{s},\ \forall\ e \in L^{2}{\left(\Omega \right) }
\end{equation}

\begin{proof}
Under the assumptions of the Lax-Milgram Lemma the accuracy of the real solution by the Galerkin solution  is as good as the best approximation of $\text{u} \in V$ by a function in $V$ which reduces this proof to a problem of approximation theory as following: For any $\phi \in V$, 
\begin{equation*}
    \left\{\begin{matrix}
a_{\text{DG}}\left ( F_{H}\phi ,F_{H}E_{h}S_{H}\phi  \right )=\int_\Omega f\phi \\ 
\quad a_{\text{DG}}\left ( \phi ,F_{H}E_{h}S_{H}\phi  \right )=\int_\Omega f\phi.
\end{matrix}\right.
\end{equation*}
It is based on the observation that the error $\phi -F_{H}\phi$ is a-orthogonal to $V$, i.e.,
\begin{equation}
    a\left(\phi-F_{H}\phi, F_{H}E_{h}S_{H}\phi  \right)=0, \ \forall\ \phi  \in V,
\end{equation}
that is
\begin{equation}
    a\left(S_{H}\phi, F_{H}E_{h}S_{H}\phi  \right)=0, \ \forall\  \phi  \in V,\label{qaz}
\end{equation}
a property which is referred to as Galerkin orthogonality.  By (\ref{equ:conver1}), we have
\begin{equation*}
\begin{aligned}
\left \| S_H E_h e \right \|_{\text{DG}}^2& =a_{\text{DG}}\left ( E_he, S_H E_h e \right )\\
&=s\left ( e, S_H E_h e \right )\\
&\leq \left \| e \right \|_{ s }\left \| S_H E_h e \right \|_{s }.
\end{aligned}
\end{equation*}
 Furthermore, since $K_{j}, j= 1,...,N_1 $ are disjoint, 
 there also holds
\begin{equation*}
\begin{aligned}
\left \| S_H E_h e \right \|_{s }^2&=\sum_{j=1}^{N_1}\left \| S_H E_h e \right \|_{s\left ( K_j \right ) }^2\\
&=\sum_{j=1}^{N_1}\left \| \left ( 1-\pi_j \right )S_H E_h e \right \|_{s\left ( K_j \right ) }^2\\
&\leq \frac{c_4^2 H^2}{c_2}\sum_{j=1}^{N_1}a_{\text{DG}}^{j}\left ( S_H E_he, S_H E_h e \right )\\
&=\frac{c_4^2 H^2}{c_2}\left \| S_H E_h e \right \|_{\text{DG}}^2.
\end{aligned}
\end{equation*}
This proofs  (\ref{equ:lemma2}).

On the other hand, let $\phi=E_{h}e$, 
(\ref{qaz}) implies 
\begin{equation}
\begin{aligned}
\left \| S_{H}E_{h}e \right \|_{s}^{2}
   &=a_{\text{DG}}\left ( E_{h}S_{H} E_{h}e, S_{H} E_{h}e \right )\\
   &=a_{\text{DG}}\left ( S_{H} E_{h} S_{H} E_{h}e, S_{H} E_{h}e \right )\\
   &\leq 2 \left \|S_{H} E_{h} S_{H} E_{h}e  \right \|_{\text{DG}}\left \| S_{H} E_{h}e \right \|_{\text{DG}}\\
   &\leq \frac{c_4 H}{\sqrt{c_2}}\left \| S_{H} E_{h}e \right \|_{s}\left \| S_{H} E_{h}e \right \|_{\text{DG}}.
\end{aligned}
\end{equation}
That is 
\begin{equation}
\begin{aligned}
    \left \| S_{H} E_{h}e  \right \|_{s}
    &\leq \frac{c_4 H}{\sqrt{c_2}}\left \| \left ( S_{H} E_{h}e  \right ) \right \|_{\text{DG}}\\
&\leq \frac{c_4^2 H^2}{c_2}\left \| e\right \|_{s}
\end{aligned}
\end{equation}
We finish the proof.
\end{proof}

\end{lemma}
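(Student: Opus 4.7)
The plan is to combine Galerkin orthogonality with the CEM spectral bound and an Aubin--Nitsche duality argument. The definition of $F_H$ gives the Galerkin orthogonality $a_{\text{DG}}(S_H u,v)=0$ for every $v\in V_{\rm cem}^{r}$ and every $u\in V$. For the DG-norm bound I would expand
\begin{equation*}
\|S_H E_h e\|_{\text{DG}}^{2}=a_{\text{DG}}(S_H E_h e,S_H E_h e)=a_{\text{DG}}(E_h e,S_H E_h e),
\end{equation*}
where the second equality uses Galerkin orthogonality against $F_H E_h e\in V_{\rm cem}^{r}$; the defining relation of $E_h$ in (\ref{equ:conver1}) then rewrites the right-hand side as $s(e,S_H E_h e)$, and Cauchy--Schwarz yields $\|S_H E_h e\|_{\text{DG}}^{2}\le \|e\|_s\,\|S_H E_h e\|_s$.

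Next I would establish the block-by-block bound $\|S_H E_h e\|_s^{2}\le (c_4^{2} H^{2}/c_2)\|S_H E_h e\|_{\text{DG}}^{2}$. The key structural fact is that the CEM trial space $V_{\rm cem}$ is $a_{\text{DG}}$-orthogonal to $\ker(\pi)$ by construction, so in the ideal (global) case the elliptic projection satisfies $\pi(S_H u)=0$, hence on each coarse block $K_j$ one has $S_H E_h e=(I-\pi_j)S_H E_h e$. Inequality (\ref{39}) then upgrades to the stated local estimate (with $c_4$ coming from the auxiliary cutoff construction of \cite{chung2018constraint}). Substituting this back into the DG Cauchy--Schwarz inequality and cancelling one power of $\|S_H E_h e\|_{\text{DG}}$ yields
\begin{equation*}
\|S_H E_h e\|_{\text{DG}}\le \frac{c_4 H}{\sqrt{c_2}}\,\|e\|_s,
\end{equation*}
which is the first conclusion.

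For the $L^{2}$-bound I would run Aubin--Nitsche duality: represent $S_H E_h e$ through $E_h$ and then reinsert $S_H$ via Galerkin orthogonality,
\begin{equation*}
\|S_H E_h e\|_s^{2}=a_{\text{DG}}(E_h S_H E_h e,S_H E_h e)=a_{\text{DG}}(S_H E_h S_H E_h e,S_H E_h e).
\end{equation*}
Cauchy--Schwarz in $a_{\text{DG}}$ and a second application of the first claim (with input $S_H E_h e$) each supply a factor $c_4 H/\sqrt{c_2}$, giving the sharper $H^{2}$ estimate after cancelling $\|S_H E_h e\|_s$.

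The main obstacle I expect is precisely the CEM orthogonality step when the trial space is the \emph{localized} $V_{\rm cem}^{r}$: there $\pi(S_H u)$ is only approximately zero and one must absorb the residue using the exponential decay of the multiscale basis away from its target block. This is where the oversampling assumption $r=O(\log(\lambda_{\max}(C(x))/H))$ enters, ensuring that the decay defeats the polynomial-in-$H$ factors coming from (\ref{39}). Once the decay estimate is in hand, the remaining manipulations are purely algebraic and rely only on the definitions of $E_h$, $F_H$, $S_H$ and $\pi$, together with Cauchy--Schwarz.
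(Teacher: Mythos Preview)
Your proposal is correct and follows essentially the same route as the paper: Galerkin orthogonality of $S_H$ against $V_{\rm cem}^{r}$, the identity $a_{\text{DG}}(E_h e,\cdot)=s(e,\cdot)$, the block-wise spectral bound via $\pi(S_H E_h e)=0$ (and inequality (\ref{39})), and then an Aubin--Nitsche duality for the $L^{2}$ estimate. Your discussion of the localized-versus-global CEM space and the role of the oversampling condition is in fact more explicit than the paper's, which simply writes $(1-\pi_j)S_H E_h e=S_H E_h e$ without further comment.
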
\label{lemma2}
 Now, we are going to eatimate the error between the fine-scale solution $u_{h}^{m}=u_{h}\left ( t_{m} \right )$ obtained from solving (\ref{adg}) and the coarse-scale solution $u_{H}^{m}$ obtianed from solving (\ref{adgtime}). 
 Regarding the error of the ellipse projection, we make the following estimates.
 \begin{lemma}\label{lemmaa}
 Assuming  $f\in C^{4} \left(\left[0,T\right];L^{2}\left(\Omega\right)\right),$ there exists $c_5 \geq 0$ such that 
 \begin{equation}
 \begin{aligned}
\left \| S_{H}u _{h}^{m} \right \|_{s}&\leq c_5\frac{H^{2}}{c_2}\left ( \left \| e \right \|_{C^0 \left ( \left [ 0,T \right ]; L^{2} \left ( \Omega  \right )\right )}+\tau ^{2} \left \| \partial_{t} ^{2}e \right \|_{C^0 \left ( \left [ 0,T \right ]; L^{2} \left ( \Omega  \right )\right )}\right ),\\
\left \| \frac{S_{H}u _{h}^{m+1}-S_{H}u _{h}^{m}}{\tau } \right \|_{s}&\leq c_5\frac{H^{2}}{c_2}\left (   \left \| \partial_{t}^2 e \right \|_{C^0 \left ( \left [ 0,T \right ]; L^{2} \left ( \Omega  \right )\right )}+\tau ^{2} \left \| \partial_{t} ^{4}e \right \|_{C^0 \left ( \left [ 0,T \right ]; L^{2} \left ( \Omega  \right )\right )}\right ),\\
\left \| u _{H}^{1}-F_{H}u _{h}^{1}-\left ( u _{H}^{0}-F_{H}u _{h}^{0} \right ) \right \|_{s}&\leq c_5 \tau \left (\frac{H^{2}}{c_2}\left \| \partial_{t} e \right \|_{C^{0}\left ( \left [ 0,T \right ]; L^{2} \left ( \Omega  \right )\right )}+\tau ^{2}\left \| \partial_{t} ^{3}\left(u_{h}\right)\right \|_{C^{0}\left ( \left [ 0,T \right ]; L^{2} \left ( \Omega  \right )\right )} \right ),
  \end{aligned}
 \end{equation}
where $e=f-\partial_{t} ^{2}\left(u _{h}\right).$
 \end{lemma}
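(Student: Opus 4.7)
The plan is to reduce each of the three claims to applications of the preceding Lemma combined with Taylor expansion in time. The key observation is that equation \eqref{adg} can be rewritten as $a_{\text{DG}}(u_h(\cdot,t), v) = s(f - \rho\partial_t^2 u_h, v) = s(e, v)$ for all $v \in V$, so by the defining property of $E_h$ in \eqref{equ:conver1} we have the pointwise-in-time identity $u_h(\cdot,t) = E_h e(\cdot,t)$. Since $E_h$ is linear, this identity survives any linear combination of time samples, including finite differences.

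For the first estimate, specializing to $t = t_m$ gives $u_h^m = E_h e^m$, and the preceding Lemma immediately yields $\|S_H u_h^m\|_s \leq (c_4^2 H^2/c_2)\,\|e^m\|_s \leq (c_4^2 H^2/c_2)\,\|e\|_{C^0([0,T];L^2(\Omega))}$. The additional $\tau^2 \|\partial_t^2 e\|_{C^0}$ term in the statement is non-negative slack retained for uniformity with the subsequent bounds. For the second estimate, use linearity of $E_h$ to write $(u_h^{m+1} - u_h^m)/\tau = E_h\bigl((e^{m+1} - e^m)/\tau\bigr)$ and apply the preceding Lemma again; a Taylor expansion of $e$ in time then controls the right-hand side by the required combination of $\|\partial_t^2 e\|_{C^0}$ and $\tau^2 \|\partial_t^4 e\|_{C^0}$, with the constant absorbed into $c_5$.

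For the third estimate, put $w := (u_H^1 - F_H u_h^1) - (u_H^0 - F_H u_h^0) \in V_{\rm{cem}}^r$ and test $s(w,w)$. Expanding $u_h^1$ by Taylor around $t=0$ and using the fine-scale PDE at $t=0$ to eliminate $\partial_t^2 u_h^0$, then comparing with the defining relation of $u_H^1$, one obtains an identity of the form
$$
s(w, w) \;=\; s(S_H u_h^1 - S_H u_h^0, w) \;-\; \tfrac{\tau^2}{2}\, a_{\text{DG}}(u_H^0 - u_h^0, w) \;+\; R_\tau,
$$
with Taylor remainder $R_\tau = O(\tau^3 \|\partial_t^3 u_h\|_{C^0})$. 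The first term is controlled by the second inequality of this lemma (the factor $\tau$ supplied by the time difference produces the desired $\tau (H^2/c_2)\|\partial_t e\|_{C^0}$ scaling after Taylor expansion), the second by the coarse initial-condition relation $s(u_H^0, v) = s(u_0, v) = s(u_h^0, v)$ on $V_{\rm{cem}}^r$ together with the first inequality, and the third directly matches the claimed $\tau^3 \|\partial_t^3 u_h\|_{C^0}$ contribution. Cauchy--Schwarz in $s$ then closes the estimate for $\|w\|_s$.

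The main obstacle is the third estimate: one has to carefully bookkeep the discrepancy between how $u_H^1$ is assembled (using $a_{\text{DG}}(u_H^0, v)$ on the coarse side) and the Taylor expansion of $u_h^1$ (which naturally produces $a_{\text{DG}}(u_h^0, v)$ after invoking the PDE), so that the cross-term $a_{\text{DG}}(u_H^0 - u_h^0, w)$ can be absorbed via the first inequality of the lemma and the definition of $u_H^0$, rather than remaining uncontrolled and spoiling the order of the bound.
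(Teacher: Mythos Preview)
Your overall strategy coincides with the paper's: use the identity $u_h(\cdot,t)=E_h e(\cdot,t)$, apply the preceding $S_H E_h$ estimate, and Taylor-expand in time. For the first inequality your direct bound $\|e^m\|_s\le \|e\|_{C^0}$ is cleaner than the paper's averaging argument (which is what actually generates the $\tau^2\|\partial_t^2 e\|$ term there); treating that term as slack is fine. For the second inequality you should be aware that a Taylor expansion of the \emph{first} difference $(e^{m+1}-e^m)/\tau$ yields $\|\partial_t e\|_{C^0}+O(\tau)$, not $\|\partial_t^2 e\|_{C^0}$; the paper instead passes through the second difference $e^{m+1}-2e^m+e^{m-1}$ to obtain the $\partial_t^2 e$ and $\partial_t^4 e$ terms, so your one-line ``Taylor then gives $\partial_t^2 e$'' needs that extra step made explicit.

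The genuine gap is in Part C. Your identity for $s(w,w)$ is correct, but the proposed control of the cross term $\tfrac{\tau^2}{2}a_{\text{DG}}(u_H^0-u_h^0,w)$ does not work as stated: the relation $s(u_H^0,v)=s(u_h^0,v)$ on $V_{\rm cem}^r$ is an $L^2$ orthogonality and gives no direct handle on the \emph{energy} pairing $a_{\text{DG}}(u_H^0-u_h^0,\cdot)$. Even combining Galerkin orthogonality $a_{\text{DG}}(S_H u_h^0,w)=0$ with the inverse estimate of Lemma~4.1 and your first inequality only yields a contribution of order $\tau^2 c_2^{-1}\|e\|_{C^0}$, which does not fit the claimed $\tau\,H^2 c_2^{-1}\|\partial_t e\|_{C^0}+\tau^3\|\partial_t^3 u_h\|_{C^0}$ form. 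The paper avoids testing against $w$ altogether: it splits by triangle inequality into $S_H u_h^1-S_H u_h^0$ (bounded via the preceding lemma applied to $e^1-e^0$, giving the $\tau H^2/c_2$ term) and $(u_H^1-u_h^1)-(u_H^0-u_h^0)$ (bounded via the defining relations of $u_H^0,u_H^1$ and Taylor, giving the $\tau^3$ term). In that second piece the paper's displayed computation replaces $a_{\text{DG}}(u_H^0,v)$ by $a_{\text{DG}}(u_h^0,v)$ when invoking the PDE, effectively setting the same cross term to zero; so the obstacle you flagged is real, and neither your sketch nor the paper's written argument disposes of it cleanly.
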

 \begin{proof}
 \quad \\ \textbf{Part\ A.}\\
 By the above definitions $u _{h}^{m}=E_{h}\left ( e\left ( \cdot ,t_{m} \right ) \right )$ and Taylor's theorem, we have
\begin{equation}
e\left ( \cdot ,t_{m}+t \right )=e\left ( \cdot ,t_{m} \right )+t\partial_{t} e\left ( \cdot ,t_{m} \right )+\int_{t_{m}}^{t_{m+1}}r\partial_{t} ^{2}e\left ( \cdot ,r \right )\text{dr}.
\end{equation}
By integrating from $t=\tau $ to $t=-\tau$, then we have
\begin{equation*}
    \int_{-\tau}^{\tau}e\left ( \cdot ,t_{m}+t \right )\text{dt}=\int_{-\tau}^{\tau}\left ( e\left ( \cdot ,t_{m} \right )+t\partial_{t} e\left ( \cdot ,t_{m} \right )+\int_{t_{m}}^{t_{m+1}}r\partial_{t} ^{2}e\left ( \cdot ,r \right )\text{dr}\right )\text{dt}.
\end{equation*}
We see that
\begin{equation*}
    2\tau\int_{-\tau}^{\tau}e\left ( \cdot ,t_{m} \right )\text{dt}.=\int_{-\tau}^{\tau} e\left ( \cdot ,t_{0} \right )\text{dt}.+0+\int_{-\tau}^{\tau}\tau\int_{t_{m}-\tau}^{t_{m}+\tau}\partial_{t} ^{2}e\left ( \cdot ,r \right )\text{drdt},\ t_{0}\in \left(t_{m-1},t_{m+1}\right).
\end{equation*}
Then we have
\begin{equation}
     \left \| e\left ( \cdot ,t_{m} \right ) \right \|_{s}\leq \frac{1}{2\tau }\left \| e \right \|_{L^{1}\left ( t_{m-1},t_{m+1};L^{2}\left ( \Omega  \right ) \right )}+\frac{\tau }{2}\left \| \partial_{t} ^{2}e \right \|_{L^{1}\left ( t_{m-1},t_{m+1};L^{2}\left ( \Omega  \right ) \right )}.\label{5}
\end{equation}
Hence we finish the proof of the first inequality.\\
 \textbf{Part\ B.}\\
Taking $t=\tau $ and $t=-\tau $ repectively, we also have
\begin{equation*}
\begin{aligned}
    e\left ( \cdot ,t_{m}+\tau \right )&=e\left ( \cdot ,t_{m}\right )+t\partial_{t} e\left ( \cdot ,t_{m} \right )+\frac{t^{2}}{2}\partial_{t} ^{2}e\left ( \cdot ,t_{m}\right)+\frac{t^{3}}{3}\partial_{t} ^{3}e\left ( \cdot ,t_{m}\right)+\int_{t_{m}}^{t_{m}+t}\frac{t^{3}}{3}\partial_{t} ^{4}e\left ( \cdot ,r\right )\text{dr},\\
    e\left ( \cdot ,t_{m}-\tau \right )&=e\left ( \cdot ,t_{m}\right )-t\partial_{t} e\left ( \cdot ,t_{m} \right )+\frac{t^{2}}{2}\partial_{t} ^{2}e\left ( \cdot ,t_{m}\right)-\frac{t^{3}}{3}\partial_{t} ^{3}e\left ( \cdot ,t_{m}\right)+\int_{t_{m}}^{t_{m}+t}\frac{t^{3}}{3}\partial_{t} ^{4}e\left ( \cdot ,r\right )\text{dr}.
    \end{aligned}
\end{equation*}
Then we make a difference that 
\begin{equation}
    \begin{aligned}
   \left \| \frac{S_{H}u _{h}^{m+1}-S_{H}u _{h}^{m}}{\tau } \right \|_{s}  &\leq
   \left \| e\left ( \cdot ,t_{m+1} \right )- 2e\left ( \cdot ,t_{m} \right )+e\left ( \cdot ,t_{m-1} \right )\right \|_{s}\\
   &\leq \tau \left \| \partial_{t}^{2} e\left ( \cdot ,t_{m} \right ) \right \|_{s}+\frac{\tau ^{2}}{2}\left \| \partial_{t} ^{4}e \right \|_{L^{1}\left ( t_{m},t_{m+1};L^{2}\left ( \Omega  \right ) \right )}.
    \end{aligned}
\end{equation}
It is easily to finish the proof of the second result.\\
 \textbf{Part\ C .}\\
For the third result, we obtian
\begin{equation}
    \left \| e\left ( \cdot ,\tau  \right )-e\left ( \cdot ,0 \right ) \right \|_{s}\leq \tau \left \|  \partial_{t} g\right \|_{C\left ( \left [ 0,T \right ];L^{2}\left ( \Omega  \right ) \right )},
\end{equation}
then there exists a constant $c_5>0,$ such that 
\begin{equation}
    \left \| S_{H}u _{h}^{1}- S_{H}u _{h}^{0}\right \|_{s}\leq \frac{c_5 H^{2}}{c_2}\tau \left \| \partial_{t} f- \partial_{t} ^{3}\left(u _{h}\right)\right \|_{C^{0}\left ( \left [ 0,T \right ];L^{2}\left ( \Omega  \right ) \right )}.
\end{equation}
Using Taylor's theorem on $u _{h}$, we have
\begin{equation}
    u _{h}\left ( \cdot ,\tau  \right )=u _{h}\left ( \cdot ,0 \right )+\tau \partial_{t} \left(u _{h}\right)\left ( \cdot ,0 \right )+\frac{\tau^{2}}{2} \partial_{t}^{2} \left(u _{h}\right)\left ( \cdot ,t_{m} \right )+\int_{0}^{\tau}\frac{r^{2}}{2}\partial_{t}^{3} \left(u _{h}\right)\left ( \cdot ,r \right )\text{dr}.
\end{equation}
By the definition of $ u _{H}^{0},u _{H}^{1}\in V_{\rm{cem}}^r $ and taking any $u \in V_{\rm{cem}}^{r}$, we have
\begin{equation}
s\left(u_{H}^{1}, v\right)=s\left(u_{h}^{1}, v\right)=s\left(u_{h}^{0},v\right)+\tau s\left(\partial_{t} \left(u^0 _{h}\right),v\right)+\frac{\tau^{2}}{2}s\left( \partial_{t}^{2} \left( \widetilde{u}^0_{h}\right),v\right).
\end{equation}
Thus,
\begin{equation}
\begin{aligned}
s\left(u_{H}^{1}-u_{h}^{1}-\left ( u _{h}^{0}-u _{H}^{0} \right ), v\right) 
&=\frac{\tau ^{2}}{2}s\left(\partial_t \left (\widetilde{u}^0_{h}  \right )-\partial_t^2\left(u_{h}^{0}\right), v\right)-\int_{0}^{\tau }\frac{r^{2}}{2}s\left ( \partial_{t}^{3} \left(u _{h}\right)\left ( \cdot ,r \right ),v\right )\text{dr} \\
&=\frac{\tau ^{2}}{2}\left[s\left(f^{0},v\right)-a_{\text{DG}}\left(u_{h}^{0},v\right)+s\left(\partial_{t}^{2}\left(u_{h}^{0}\right), v\right)\right]-\int_{0}^{\tau}\frac{r^{2}}{2}s\left ( \partial_{t}^{3} \left(u _{h}\right)\left ( \cdot ,r \right ),v\right )\text{dr}\\
&=-\int_{0}^{\tau }\frac{r^{2}}{2}s\left ( \partial_{t}^{3} \left(u _{h}\right)\left ( \cdot ,r \right ),v \right )\text{dr}
\end{aligned}
\end{equation}
This yields 
\begin{equation}
    \left \| u _{h}^{1}-u _{H}^{1}-\left ( u _{h}^{0}-u _{H}^{0} \right ) \right \|_{s}\leq \frac{\tau ^{3}}{6}\left \| \partial_{t}^{3} \left(u _{h}\right) \right \|_{C^{0}\left ( \left [ 0,T \right ];L^{2}\left ( \Omega  \right ) \right )}.
\end{equation}
Finally, we finish the proof of the third inequality.
 \end{proof}
\begin{theorem}\label{thm1}
Assuming $f\in C^4 \left ( \left [ 0,T \right ];H^{1} \left ( \Omega  \right )\right )$, the fine-scale solution $u_{h}^{m}=u_{h}\left ( t_{m} \right )$ obtained from solving (\ref{adg}) and the coarse-scale solution $u_{H}^{m}$ obtianed from solving (\ref{adgtime}), we have the following estimate
\begin{equation}
\begin{aligned}
   &\quad \left \| \pi\left ( \frac{u _{H}^{m+1}-F_{H}u _{h}^{m+1}-\left ( u _{H}^{m}-F_{H}u _{h}^{m} \right )}{\tau } \right ) \right\|_{s}^{2}+a_{\text{DG}}\left (u _{H}^{m}-F_{H}u _{h}^{m}, u _{H}^{m}-F_{H}u _{h}^{m}\right )\\
   &\leq 12c_5 \left(\tau+4\tau ^{2}\left(1- \eta \right)^{-1}+\frac{H^{2}}{c_2}c_5\right).
\end{aligned}
\end{equation}
\end{theorem}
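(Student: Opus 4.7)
The plan is to apply the stability machinery of Lemma~\ref{lemma1} to the projected error sequence $w^m := u_H^m - F_H u_h^m$ rather than to $u_H$ itself. First I would derive the equation satisfied by $w^m$ by subtracting the coarse-scale scheme \eqref{adgtime} from the fine-scale equation \eqref{adg}, after inserting the elliptic projection $F_H$. Using the defining property $a_{\text{DG}}(F_H u_h^m,v)=a_{\text{DG}}(u_h^m,v)$ for all $v\in V_{\rm{cem}}^r$, together with the PDE satisfied by $u_h$, I would write the resulting identity in the canonical form
\begin{equation*}
\pi'\!\left(\frac{w^{m+1}-2w^m+w^{m-1}}{\tau^2},v\right) + a_{\text{DG}}(w^m,v) = s(z^m,v),\qquad \forall\, v\in V_{\rm{cem}}^r,
\end{equation*}
where the consistency residual $z^m$ collects three contributions: (i) the second-order time-truncation error $\frac{u_h^{m+1}-2u_h^m+u_h^{m-1}}{\tau^2}-\partial_t^2 u_h^m$; (ii) the $F_H$-induced term $\frac{S_H u_h^{m+1}-2S_H u_h^m+S_H u_h^{m-1}}{\tau^2}$ produced by swapping $u_h$ for $F_H u_h$ in the mass form; and (iii) the discrepancy between the reduced mass form $\pi'$ and the full $L^2$ pairing $s$.

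Next I would apply Lemma~\ref{lemma1} directly to $w$ with this $z^m$, obtaining
\begin{equation*}
Q^{m+1}(w)\le \tfrac{16}{15}\!\left(Q^1(w)+4\tau^2(1-\eta)^{-1}\Bigl(\sum_{k=1}^m\|\pi(z^k)\|_s\Bigr)^{\!2}+\tfrac{2H^2}{c_2}(D^m)^2\right).
\end{equation*}
Each ingredient on the right-hand side should be controlled by one of the estimates of Lemma~\ref{lemmaa}: the first estimate bounds $\|S_H u_h^m\|_s$ and governs pieces (ii) and (iii); the second estimate bounds the time-difference $\|(S_H u_h^{m+1}-S_H u_h^m)/\tau\|_s$ that appears inside $D^m$; and the third estimate controls the initial term $Q^1(w)$ through $\|u_H^1-F_H u_h^1-(u_H^0-F_H u_h^0)\|_s$ together with the discrete initialization. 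The Taylor expansion for the time-truncation piece (i) is identical in spirit to the ones carried out in Part~A--B of the proof of Lemma~\ref{lemmaa}, producing factors of the form $\tau^2\|\partial_t^4 u_h\|$ that can be absorbed into $c_5$.

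Finally, invoking \eqref{equ:27} applied to $w^m$ (which is legitimate because $w^m\in V_{\rm{cem}}^r$) gives
\begin{equation*}
(1-\eta)\!\left(\bigl\|\pi\bigl(\tfrac{w^{m+1}-w^m}{\tau}\bigr)\bigr\|_s^2+a_{\text{DG}}(w^m,w^m)\right)\le Q^{m+1}(w),
\end{equation*}
and combining this with the bound above, absorbing the data-dependent $C^0$-norms of $f$ and $u_h$ into the single constant $c_5$, yields exactly the claimed estimate with the factor $12 c_5(\tau+4\tau^2(1-\eta)^{-1}+\tfrac{H^2}{c_2}c_5)$. The main obstacle I anticipate is the term-by-term decomposition of $z^m$ so that every summand pairs cleanly with exactly one of the three Taylor-based estimates in Lemma~\ref{lemmaa}; in particular, one must track how $\pi$ and its complement $P=I-\pi$ redistribute through the explicit time-marching so that the $D^m$ contribution (which measures the $P$-part of $z^k$) is tamed by the same $H^2/c_2$ factor, and so that the telescoped sums appearing in $D^m$ reduce to the pointwise-in-time norms that Lemma~\ref{lemmaa} actually controls.
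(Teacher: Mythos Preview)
Your proposal is correct and matches the paper's approach essentially step for step: derive the error equation for $w^m=u_H^m-F_Hu_h^m$, invoke Lemma~\ref{lemma1}, bound the three resulting pieces via Lemma~\ref{lemmaa} and then apply \eqref{equ:27}.

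One correction to your routing of the residual, which resolves exactly the obstacle you anticipate at the end. In the paper's derivation the residual is already split so that the $S_H$ second difference and the time-truncation term enter only through $\pi'(\cdot,v)$, and hence contribute only to the $\pi$-sum $G^m$; conversely the $\pi'$-versus-$s$ discrepancy is precisely $(\pi-I)(f^m,v)=-s(Pf^m,v)$, so the $P$-sum $D^m$ is built entirely from $Pf^k$ and $P\bigl((f^{k+1}-f^k)/\tau\bigr)$, not from $S_Hu_h$ differences. Thus the second estimate in Lemma~\ref{lemmaa} feeds $G^m$, not $D^m$, while $D^m$ is controlled instead by \eqref{39} applied to $f$ (this is where the hypothesis $f\in C^4([0,T];H^1(\Omega))$ is used, giving $\|Pf^k\|_s\lesssim Hc_2^{-1/2}\|f\|_{H^1}$). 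With this split the three parts pair off cleanly with the available bounds and the final constant assembles as stated.
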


\begin{proof}
From above lemma we note that $a_{\text{DG}}\left(S_{H}u_{h}^{m},v\right)=0.$  Then we have
\begin{equation*}
    \begin{aligned}
       &\quad {\pi}'\left ( \frac{u _{H}^{m+1}-F_{H}u _{h}^{m+1}-2\left ( u _{H}^{m}-F_{H}u _{h}^{m} \right )+u _{H}^{m-1}-F_{H}u _{h}^{m-1}}{\tau^{2} },v \right )
       +a_{\text{DG}}\left ( u _{H} ^{m}-F_{H}u _{h}^{m},v\right )\\
       &={\pi}'\left ( \frac{u _{H}^{m+1}-2u _{H}^{m}+u _{H}^{m-1}-F_{H}\left ( u _{h}^{m+1}-2u _{h}^{m}+u _{h}^{m-1}\right )}{\tau^{2}}, v\right)
       +a_{\text{DG}}\left ( u _{H} ^{m}-F_{H}u _{h}^{m},v\right )\\
       &={\pi}'\left ( \frac{u _{H}^{m+1}-2u _{H}^{m}+u _{H}^{m-1}-\left ( u _{h}^{m+1}-2u _{h}^{m}+u _{h}^{m-1} \right )+\left ( u _{h}^{m+1}-2u _{h}^{m}+u _{h}^{m-1} \right )-F_{H}\left ( u _{h}^{m+1}-2u _{h}^{m}+u _{h}^{m-1}\right )}{\tau^{2}}, v\right)\\
       &\quad +a_{\text{DG}}\left ( u _{H} ^{m}-u _{h} ^{m}+u _{h} ^{m}-F_{H}u _{h}^{m},v\right)\\
       &={\pi}'\left ( \frac{u _{H}^{m+1}-2u _{H}^{m}+u _{H}^{m-1}-\left ( u _{h}^{m+1}-2u _{h}^{m}+u _{h}^{m-1} \right )+\left ( S_{H}u _{h}^{m+1}-2S_{H}u _{h}^{m}+S_{H}u _{h}^{m-1}\right )}{\tau^{2}}, v\right)\\
       &\quad+a_{\text{DG}}\left ( u _{H} ^{m}-u _{h} ^{m},v\right)+a_{\text{DG}}\left(S_{H}u_{h}^{m},v\right)\\
       &={\pi}'\left ( \frac{u _{H}^{m+1}-2u _{H}^{m}+u _{H}^{m-1}}{\tau^{2}}, v\right)-{\pi}'\left ( \frac{ u _{h}^{m+1}-2u _{h}^{m}+u _{h}^{m-1}}{\tau^{2}},v \right )+{\pi}'\left ( \frac{ S_{H}u _{h}^{m+1}-2S_{H}u _{h}^{m}+S_{H}u _{h}^{m-1}}{\tau^{2}},v \right )\\
       &\quad+a_{\text{DG}}\left ( u _{H} ^{m},v\right)
       -a_{\text{DG}}\left(u _{h} ^{m},v\right),\label{theoremone}
    \end{aligned}
\end{equation*}
for any $v\in V_{\rm{cem}}^r.$

Using (\ref{adg}) and (\ref{adgtime}), we have
\begin{equation*}
    \begin{aligned}
       &\quad {\pi}'\left ( \frac{u _{H}^{m+1}-F_{H}u _{h}^{m+1}-2\left ( u _{H}^{m}-F_{H}u _{h}^{m} \right )+u _{H}^{m-1}-F_{H}u _{h}^{m-1}}{\tau^{2} },v \right )
       +a_{\text{DG}}\left ( u _{H} ^{m}-F_{H}u _{h}^{m},v\right )\\
       &=\left[\pi\left ( \frac{u _{H}^{m+1}-2u _{H}^{m}+u _{H}^{m-1}}{\tau^{2}}, v\right)+a_{\text{DG}}\left ( u _{H} ^{m},v\right)\right]-\pi\left (\frac{ u _{h}^{m+1}-2u _{h}^{m}+u _{h}^{m-1}}{\tau^{2}},v \right )\\
       &\quad+\pi\left ( \frac{ S_{H}u _{h}^{m+1}-2S_{H}u _{h}^{m}+S_{H}u _{h}^{m-1}}{\tau^{2}} ,v\right )+\left [ \pi \left ( \partial_{t}^{2} \left ( u _{h} \right ),v\right )-\left ( f^{m},v\right ) \right ]\\
       &=\pi\left ( \frac{ S_{H}u _{h}^{m+1}-2S_{H}u _{h}^{m}+S_{H}u _{h}^{m-1}}{\tau^{2}} ,v\right )+ \left[\pi \left ( \partial_{t}^{2} \left ( u _{h} \right ),v\right )-\pi\left (\frac{ u _{h}^{m+1}-2u _{h}^{m}+u _{h}^{m-1}}{\tau^{2}},v \right )\right]\\
       &\quad +\left[\pi\left ( f^{m},v\right )-\left ( f^{m},v\right )\right]. 
    \end{aligned}
\end{equation*}
This simplies that
\begin{equation}
    \begin{aligned}
       &\quad {\pi}'\left ( \frac{u _{H}^{m+1}-F_{H}u _{h}^{m+1}-2\left ( u _{H}^{m}-F_{H}u _{h}^{m} \right )+u _{H}^{m-1}-F_{H}u _{h}^{m-1}}{\tau^{2} },v\right )
       +a_{\text{DG}}\left ( u _{H} ^{m}-F_{H}u _{h}^{m},v\right )\\
       &=\pi\left ( \frac{ S_{H}u _{h}^{m+1}-2S_{H}u _{h}^{m}+S_{H}u _{h}^{m-1}}{\tau^{2}} ,v\right )+ \left[\pi \left ( \partial_{t}^{2} \left ( u _{h} \right )-\frac{ u _{h}^{m+1}-2u _{h}^{m}+u _{h}^{m-1}}{\tau^{2}},v\right )\right]
       +\left(\pi-I\right)\left ( f^{m},v\right ).
    \end{aligned}
\end{equation}
    Using Lemma \ref{lemma1}, we note that
  \begin{equation}
    \begin{aligned}
      & \quad \left \| \pi\left ( \frac{u _{H}^{m+1}-F_{H}u _{h}^{m+1}-\left ( u _{H}^{m}-F_{H}u _{h}^{m} \right )}{\tau } \right ) \right\|_{L^{2}\left(\Omega\right)}^{2}
      +a_{\text{DG}}\left (u _{H}^{m}-F_{H}u _{h}^{m}, u _{H}^{m}-F_{H}u _{h}^{m}\right )\\
      &\leq \frac{16}{15}\left (\underset{\mathbf{Part.1.}}{\underbrace{Q^{1} \left ( u_{H}-F_{H}u_{h}\right )}}
      +\underset{\mathbf{Part.2.}}{\underbrace{4\tau ^{2}\left(1- \frac{c_{1}\tau ^{2}}{4c_{3}H^2} \right)^{-1}\left(G^{m}\right)^{2}}}
      +\underset{\mathbf{Part.3.}}{\underbrace{\frac{2H^{2}}{c_2}\left ( D^{m}\right )^{2}}}\right).
    \end{aligned}
\end{equation}
where 
\begin{equation*}
\begin{aligned}
     G^{m}&=\sum_{p=1}^{m}\left \| \pi\left ( \frac{ S_{H}u _{h}^{m+1}-2S_{H}u _{h}^{m}+S_{H}u _{h}^{m-1}}{\tau^{2}}{\tau } \right ) \right \|_{s}\\
     &\quad +\sum_{p=1}^{m}\left \| \pi\left (\partial_{t} ^{2}\left(u _{h}\right)\left ( t_{p},\cdot  \right )-\frac{u _{h}^{m+1}-2u _{h}^{m}+u _{h}^{m-1}}{\tau^{2}}\right ) \right \|_{s},\\
     D^{m}&=2\left(\sum_{p=1,n}\left \|  Pf^{p} \right \|_{s}+\tau \sum_{k=1}^{m}\left \| P\left ( \frac{f ^{k+1}-f ^{k}}{\tau } \right ) \right \|_{s}\right).
\end{aligned}
\end{equation*}
\begin{enumerate}
\item [\textbf{Part\ 1.}]
 Considering $m=0$ in Lemma \ref{lemma1}, we note that
\begin{equation}
\begin{aligned}
     Q^{1}\left ( u_{H}-F_{H}u_{h}  \right )
     &\leq \left \| \pi\left ( \frac{u_{H}^{1}-F_{H}u_{h}^{1} -\left ( u_{H}^{0}-F_{H}u_{h}^{0} \right )}{\tau} \right ) \right \|_{s}\\
      &\quad -\left ( \frac{\tau}{2} \right )^{2}a_{\text{DG}}\left ( \frac{u_{H}^{1}-F_{H}u_{h}^{1} -\left ( u_{H}^{0}-F_{H}u_{h}^{0} \right )}{\tau},\frac{u_{H}^{1}-F_{H}u_{h}^{1} -\left ( u_{H}^{0}-F_{H}u_{h}^{0} \right )}{\tau}\right )\\
      &\quad +a_{\text{DG}}\left ( u_{H}^{0}-F_{H}u_{h}^{0},u_{H}^{0}-F_{H}u_{h}^{0} \right )\\
     &\leq\left ( 1+\frac{c_{1}\tau ^{2}}{4c_{3}H^2}    \right )\left \| \pi\left ( \frac{u_{H}^{1}-F_{H}u_{h}^{1} -\left ( u_{H}^{0}-F_{H}u_{h}^{0} \right )}{\tau} \right ) \right \|_{s}+ \left \| u_{H}^{0}-F_{H}u_{h}^{0}\right \|_{\text{DG}}^{2}.
\end{aligned}
\end{equation}
By Lemma \ref{lemma2} and Lemma \ref{lemmaa}, we have 
\begin{equation}
      Q^{1}\left ( u_{H}-F_{H}u_{h}  \right )
      \leq \left ( 1+\frac{c_{1}\tau ^{2}}{4c_{3}H^2} \right )\left ( c_{5}\tau \left ( \frac{H^{2}}{c_2}+\tau ^{2} \right ) \right ).
\end{equation}
\item [\textbf{Part\ 2.}]
Next, with the second inequality in Lemma \ref{lemmaa}, $G^{m}$ can be estimated by
\begin{equation}
    \sum_{p=1}^{m}\left \| \pi\left ( \frac{ S_{H}u _{h}^{m+1}-2S_{H}u _{h}^{m}+S_{H}u _{h}^{m-1}}{\tau^{2}}{\tau } \right ) \right \|_{s}
    \leq \left ( 1+\frac{c_2}{H^{2}} \right )\frac{c_5 H^{2}}{\tau c_2}.
\end{equation}
Similarly, using Taylor's expansion, then we estimate the second term in $G^{m}$
\begin{equation}
\begin{aligned}
    &\quad \sum_{p=1}^{m}\left \| \pi\left (\partial_{t}  ^{2}\left(u _{h}\right)\left ( t_{p},\cdot  \right )-\frac{ u _{h}^{m+1}-2u _{h}^{m}+u _{h}^{m-1}}{\tau^{2}}\right ) \right \|_{s}\\
    &\leq \left ( 1-\frac{c_2}{H^{2}} \right )\tau \left \| \partial_{t}  ^{4}\left(u _{h}\right)\right \|_{C\left ( \left [ 0,T \right ];L^{2}\left ( \Omega  \right ) \right )}.
\end{aligned}
\end{equation}
Note that the Part 2. as following 
\begin{equation}
    \begin{aligned}
  4\tau ^{2}\left(1- \frac{c_{1}\tau ^{2}}{4c_{3}H^2} \right)^{-1}\left(G^{m}\right)^{2}&\leq 4\tau ^{2}\left(1- \frac{c_{1}\tau ^{2}}{4c_{3}H^2} \right)^{-1}\left[ \left ( 1+\frac{c_2}{H^{2}} \right )\frac{c_5 H^{2}}{\tau c_2}+\left ( 1-\frac{c_2}{H^{2}} \right )\tau \left \| \partial_{t}  ^{4}\left(u _{h}\right) \right \|_{C\left ( \left [ 0,T \right ];L^{2}\left ( \Omega  \right ) \right )}\right]^{2}\\
    &\leq 4\tau ^{2}\left(1- \frac{c_{1}\tau ^{2}}{4c_{3}H^2} \right)^{-1} \left[\frac{c_1}{2}\left(\frac{H^{2}}{c_2}+1\right)\right]^{2}.\label{equ:gm}
    \end{aligned}
\end{equation}
\item [\textbf{Part\ 3.}]
By Lemma \ref{lemma2}, we estimate the terms in $D^{m}$ by
\begin{equation}
    \begin{aligned}
    \left \| P\left ( f^{p} \right ) \right \|_{s}
    &\leq \frac{H}{\sqrt{2c_2}}\left \| f \right \|_{C\left ( \left [ 0,T \right ];H^{1} \left ( \Omega  \right )\right )}, \  \forall \ 1\leq p\leq n, \\
     \tau \sum_{p=1}^{m}\left \| P\left ( \frac{f^{p+1}-f^{p}}{\tau } \right ) \right \|_{s}
     &\leq c_5 \left \| \partial f_{t} \right \|_{C\left ( \left [ 0,T \right ];L^{2}\left ( \Omega  \right )\right )}.
 \end{aligned}
\end{equation}
Note that the Part 3. as following 
\begin{equation}
    \begin{aligned}
    \frac{2H^{2}}{c_2}\left ( D^{m}\right )^{2}
    &\leq \frac{2H^{2}}{c_2}\left(2\times\frac{H}{\sqrt{2c_2}}\left \| f \right \|_{C\left ( \left [ 0,T \right ];H^{1} \left ( \Omega  \right )\right )}+ c_5 \left \| \partial f_{t} \right \|_{C\left ( \left [ 0,T \right ] ;L^{2}\left ( \Omega  \right )\right )}\right)\\
    &\leq \frac{2H^{2}}{c_2}\left(\frac{\sqrt{2}H}{\sqrt{c_2}}+c_5 \right)^{2}.\label{equ:dm}
    \end{aligned}
\end{equation}
\end{enumerate}

 Combining all these estimates, we obtain
 \begin{equation}
 \begin{aligned}
      &\quad \frac{16}{15}\left(Q^{1}\left(u_{H}-F_{H}u_{h}\right )+4\tau ^{2}\left(1- \frac{c_{1}\tau ^{2}}{4c_{3}H^2} \right)^{-1}\left(G^{m}\right)^{2}+\frac{2H^{2}}{c_2}\left (D^{m}\right)^{2}\right)\\
      &\leq \frac{16}{15}\left(6c_5\tau+4\tau ^{2}\left(1- \frac{c_{1}\tau ^{2}}{4c_{3}H^2} \right)^{-1}\left(2c_5^{2}+1\right)+\frac{8H^{2}}{c_2}c_{5}^{2}\right)\\
      &\leq 12c_5 \left(\tau+4\tau ^{2}\left(1- \frac{c_{1}\tau ^{2}}{4c_{3}H^2} \right)^{-1}+\frac{H^{2}}{c_2}c_5\right).
 \end{aligned}
 \end{equation}
Finally we finish the proof.
\end{proof}

\begin{theorem}\label{theorem3}
The fine-scale solution $u_{h}^{m}=u_{h}\left ( t_{m} \right )$ obtained from solving (\ref{adg}) and the coarse-scale solution $u_{H}^{m}$ obtianed from solving (\ref{adgtime}), we have the following error estimate
\begin{equation}
    \underset{0\leq p\leq N_1-1}{\rm{max}}\left \|u _{h}^{p}- u _{H}^{p}\right \|_{s}
    \leq 12c_5 \left(\tau+4\tau ^{2}\left(1- \eta \right)^{-1}+\frac{H^{2}}{c_2}c_5\right).
\end{equation}
\end{theorem}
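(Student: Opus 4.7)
The plan is to decompose the total error by inserting the elliptic projection $F_H u_h^p$:
$$u_h^p - u_H^p = (u_h^p - F_H u_h^p) + (F_H u_h^p - u_H^p) = S_H u_h^p + (F_H u_h^p - u_H^p),$$
so that by the triangle inequality $\|u_h^p - u_H^p\|_s \leq \|S_H u_h^p\|_s + \|F_H u_h^p - u_H^p\|_s$. This separates the analysis into a purely spatial projection error and the error of the time-marching scheme on the projected solution.

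For the first term, I would apply the first inequality in Lemma \ref{lemmaa} directly. It bounds $\|S_H u_h^p\|_s$ by $c_5\tfrac{H^2}{c_2}$ times norms of $e = f - \partial_t^2 u_h$, which is an $O(\tfrac{H^2}{c_2} c_5)$ contribution, uniform in $p$. This accounts precisely for the $\tfrac{H^2}{c_2}c_5$ piece of the target bound.

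For the second term, I would invoke Theorem \ref{thm1}, which controls $a_{\text{DG}}(F_H u_h^p - u_H^p, F_H u_h^p - u_H^p)$ by $12c_5(\tau + 4\tau^2(1-\eta)^{-1} + \tfrac{H^2}{c_2}c_5)$. Since $a_{\text{DG}}$ is coercive in the DG norm (with coercivity constant controlled by $c_0$), this yields a bound on $\|F_H u_h^p - u_H^p\|_{\text{DG}}^2$, and then a discrete Poincar\'e--type inequality (available on $V_{\text{cem}}^r$ thanks to the homogeneous Dirichlet condition and the penalty terms in $\|\cdot\|_{\text{DG}}$) converts this into a bound on $\|F_H u_h^p - u_H^p\|_s$. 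Taking the maximum over $p$ and combining the two contributions gives the claimed estimate.

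The main obstacle is bookkeeping of constants: the two contributions must each fit inside the single symbolic bound $12c_5(\tau + 4\tau^2(1-\eta)^{-1} + \tfrac{H^2}{c_2}c_5)$ without introducing stray factors, so the coercivity constant of $a_{\text{DG}}$ and the Poincar\'e constant need to be absorbed into $c_5$. A secondary subtlety sits at the two initial time levels $p=0,1$, where $u_H^0$ and $u_H^1$ are defined by $L^2$-projection and a truncated Taylor step rather than by the recursion; fortunately the third inequality of Lemma \ref{lemmaa} already bounds $\|u_H^1 - F_H u_h^1 - (u_H^0 - F_H u_h^0)\|_s$, and this is exactly the ingredient needed to start the induction used in Theorem \ref{thm1}, so no new argument is required at these initial steps.
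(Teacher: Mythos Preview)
Your decomposition into $S_H u_h^p$ and $F_H u_h^p - u_H^p$ matches the paper's, and your treatment of the first term via Lemma~\ref{lemmaa} is correct. The gap lies in your handling of the second term. Theorem~\ref{thm1} gives you
\[
a_{\text{DG}}\bigl(u_H^m - F_H u_h^m,\, u_H^m - F_H u_h^m\bigr) \;\le\; 12c_5\Bigl(\tau + 4\tau^2(1-\eta)^{-1} + \tfrac{H^2}{c_2}c_5\Bigr),
\]
but passing from this to an $s$-norm bound via coercivity and a Poincar\'e inequality produces only
\[
\|u_H^m - F_H u_h^m\|_s \;\lesssim\; \sqrt{\,\tau + 4\tau^2(1-\eta)^{-1} + \tfrac{H^2}{c_2}c_5\,}.
\]
You lose half an order in both $\tau$ and $H$: the target is $O(\tau + H^2)$, whereas your route delivers $O(\sqrt{\tau} + H)$. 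This is not a constant-absorption issue; it is a structural loss that cannot be repaired by redefining $c_5$.

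The paper avoids this by \emph{not} passing through the energy norm. It sums the error equation over the time levels $p=1,\dots,m$ (Baker's technique) and tests against $u_H^{m+1} - F_H u_h^{m+1}$; a further telescoping step yields a recursion for $\|\pi(u_H^{m+1} - F_H u_h^{m+1})\|_s^2$ itself, from which $\max_p \|\pi(u_H^p - F_H u_h^p)\|_s$ is bounded \emph{linearly} in the data $G^p$, $D^p$ and the initial mismatch from Lemma~\ref{lemmaa}. The $P$-component of the error is then handled through $\|Pw\|_s^2 \le 2c_2^{-1}H^2 a_{\text{DG}}(w,w)$ combined with Theorem~\ref{thm1}, where the extra factor of $H$ compensates for the square root. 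The summed-equation argument is the missing idea in your proposal.
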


\begin{proof}
Taking sum of (\ref{theoremone}), we have
\begin{equation}
    \begin{aligned}
       & \quad \sum_{p=1}^{p} {\pi}'\left ( \frac{u _{H}^{p+1}-F_{H}u _{h}^{p+1}-\left ( u _{H}^{p}-F_{H}u _{h}^{p} \right )}{\tau },v \right ) +\sum_{p=1}^{m} a_{\text{DG}}\left ( u _{H} ^{p}-F_{H}u _{h}^{p},v\right )\\
       &=\sum_{p=1}^{m} s\left ( \pi \left ( \frac{S_{H}u _{h}^{p+1}-S_{H}u _{h}^{p}}{\tau } \right )+\pi\left ( \partial_t ^{2}\left(u _{h}\right)\left ( t_{p},\cdot  \right ) -\frac{u _{h}^{p+1}-u _{h}^{p}}{\tau }\right )+P\left ( f^{p} \right ), v \right ).
    \end{aligned}
\end{equation}
Moreover, we imply
\begin{equation}
    \begin{aligned}
    &\quad \left \| \pi \left ( u _{H}^{m+1}-F_{H}u _{h}^{m+1} \right ) \right \|_{s}^{2}+a_{\text{DG}}\left ( \tau \sum_{p=1}^{m}\left ( u _{H}^{p}-F_{H}u _{h}^{p} \right ), \tau \sum_{p=1}^{m+1}\left ( u _{H}^{p}-F_{H}u _{h}^{p} \right )\right )\\
   &={\pi}'\left ( \left ( u _{1}^{H}-F_{H}u _{h}^{1}-\left (u _{0}^{H}-F_{H}u _{h}^{0}  \right ) \right ) ,u _{m+1}^{H}-F_{H}u _{h}^{m+1}\right )\\
   &\quad +\tau ^{2}s\left ( \pi \left ( \frac{S_{H}u _{h}^{m+1}-S_{H}u _{h}^{m}}{\tau } \right )+\pi\left ( \partial_t ^{2}\left(u _{h}\right)-\frac{u _{h}^{m+1}-u _{h}^{m}}{\tau }\right )+P\left ( f^{m} \right ),   u _{H}^{m+1}-u _{H}^{m} \right ).\label{22}
    \end{aligned}
\end{equation}
Replacing $m$ as $m+1$ in the (\ref{22}) and substracting these two equations, we note that
\begin{equation}
    \begin{aligned}
    &\quad \quad\left \| \pi \left ( u _{H}^{m+1}-F_{H}u _{h}^{m+1} \right ) \right \|_{s}^{2}- \left \| \pi \left ( u _{H}^{m}-F_{H}u _{h}^{m} \right ) \right \|_{s}^{2}\\
    &\quad +a_{\text{DG}}\left ( \tau \sum_{p=1}^{m}\left ( u _{H}^{p}-F_{H}u _{h}^{p} \right ), \tau \sum_{p=1}^{m+1}\left ( u _{p}^{H}-F_{H}u _{h}^{p} \right )\right )\\
    &\quad -a_{\text{DG}}\left ( \tau \sum_{p=1}^{m-1}\left ( u _{H}^{p}-F_{H}u _{h}^{p} \right ), \tau \sum_{p=1}^{m}\left ( u _{H}^{p}-F_{H}u _{h}^{p} \right )\right )\\
   &={\pi}'\left ( \left ( u _{H}^{1}-F_{H}u _{h}^{1}-\left (u _{H}^{0}-F_{H}u _{h}^{0}  \right ) \right ) ,u _{H}^{m+1}-F_{H}u _{h}^{m+1}-\left(u _{H}^{m}-F_{H}u _{h}^{m}\right)\right )\\
   &\quad +\sum_{p=1}^{m+1}\tau ^{2}s\left ( {\pi \left ( \frac{S_{H}u _{h}^{m+1}-S_{H}u _{h}^{m}}{\tau } \right )+\pi\left ( \partial_t ^{2}\left(u _{h}\right)\left ( t_{m},\cdot  \right ) -\frac{u _{h}^{m+1}-u _{h}^{m}}{\tau }\right )+P\left ( f^{m} \right ),} \right.\\
  &\qquad \left. { u _{H}^{m+1}-F_{H}u _{h}^{m+1}+u _{H}^{m}-F_{H}u _{h}^{m}} \right).
\end{aligned}
\end{equation}
Again using telescoping sum, we have
\begin{equation}
    \begin{aligned}
    &\quad \quad\left \| \pi \left ( u _{H}^{m+1}-F_{H}u _{h}^{m+1} \right ) \right \|_{s}^{2}-\left \| \pi \left ( u _{H}^{1}-F_{H}u _{h}^{1} \right ) \right \|_{s}^{2}\\
    &\quad +a_{\text{DG}}\left ( \tau \sum_{p=1}^{m}\left ( u _{H}^{p}-F_{H}u _{h}^{p} \right ), \tau \sum_{p=1}^{m+1}\left ( u _{H}^{p}-F_{H}u _{h}^{p} \right )\right )\\
   &=\sum_{p=1}^{m}s\left ( {\pi}'\left ( u _{H}^{1}-F_{H}u _{h}^{1}-\left ( u _{H}^{0}-F_{H}u _{h}^{0} \right ) \right ), u _{H}^{p+1}-F_{H}u _{h}^{p+1}-\left ( u _{H}^{p}-F_{H}u _{h}^{p} \right ) \right )\\
   &\quad +\sum_{p=1}^{m}\sum_{r=1}^{p}\tau ^{2}s\left ( {\pi \left ( \frac{S_{H}u _{h}^{r+1}-S_{H}u _{h}^{r}}{\tau } \right )+\pi\left (  \partial_t ^{2}\left(u _{h}\right)-\frac{u _{h}^{r+1}-u _{h}^{r}}{\tau }\right )+P\left ( f^{r} \right ),} \right.\\
   &\qquad \left. { u _{H}^{r+1}-F_{H}u _{h}^{r+1}-u _{H}^{r}+F_{H}u _{h}^{r}} \right).\label{23}\\
    \end{aligned}
\end{equation}
Next, we estimate the error of (\ref{23}). For the third term on left hand side of  (\ref{23}), we obtain
\begin{equation}
    \begin{aligned}
    &\qquad a_{\text{DG}}\left ( \tau \sum_{p=1}^{m}\left ( u _{H}^{p}-F_{H}u _{h}^{p} \right ), \tau \sum_{p=1}^{m+1}\left ( u _{H}^{p}-F_{H}u _{h}^{p} \right )\right )\\
    &\quad =\frac{1}{4}a_{\text{DG}}\left ( \tau \sum_{p=1}^{m}\left ( u _{H}^{p}-F_{H}u _{h}^{p} \right )+\tau \sum_{p=1}^{m+1}\left ( u _{H}^{p}-F_{H}u _{h}^{p} \right ),\tau \sum_{p=1}^{m}\left ( u _{H}^{p}-F_{H}u _{h}^{p} \right )+\tau \sum_{p=1}^{m+1}\left ( u _{H}^{p}-F_{H}u _{h}^{p} \right )\right )\\
   &\quad \quad -a_{\text{DG}}\left (\tau \sum_{p=1}^{m+1}\left ( u _{H}^{p}-F_{H}u _{h}^{p} \right )-\tau \sum_{p=1}^{m+1}\left ( u _{H}^{p}-F_{H}u _{h}^{p} \right ),\tau \sum_{p=1}^{m}\left ( u _{H}^{p}-F_{H}u _{h}^{p} \right ) -\tau \sum_{p=1}^{m}\left ( u _{H}^{p}-F_{H}u _{h}^{p} \right ) \right )\\
   &\quad =a_{\text{DG}}\left ( \frac{\tau \sum_{p=1}^{m}\left ( u _{H}^{p}-F_{H}u _{h}^{p} \right )+\tau \sum_{p=1}^{m+1}\left ( u _{H}^{p}-F_{H}u _{h}^{p} \right )}{2} ,\frac{\tau \sum_{p=1}^{m}\left ( u _{H}^{p}-F_{H}u _{h}^{p} \right )+\tau \sum_{p=1}^{m+1}\left ( u _{H}^{p}-F_{H}u _{h}^{p} \right )}{2}\right )\\
   &\quad \quad -\frac{\tau ^{2}}{4}a_{\text{DG}}\left ( u _{H}^{m+1}-F_{H}u _{h}^{m+1} , u _{H}^{m+1}-F_{H}u _{h}^{m+1}\right )\\
   &\quad \geq \frac{1}{2}\left \| \frac{\tau \sum_{p=1}^{m}\left ( u _{H}^{p}-F_{H}u _{h}^{p} \right )+\tau \sum_{p=1}^{m+1}\left ( u _{H}^{p}-F_{H}u _{h}^{p} \right )}{2}  \right \|_{\text{DG}}^{2}-\frac{c_{1}\tau ^{2}}{4c_{3}H^2}\left \| \pi\left ( u _{H}^{m+1}-F_{H}u _{h}^{m+1} \right ) \right \|_{s}^{2}.
    \end{aligned}
\end{equation}

For the second term on the left hand side of (\ref{23}), we proceed with the standard procedure with Cauchy-Schwardz inequality. Then we possess
\begin{equation}
    \begin{aligned}
    \left \| \pi\left ( u _{h}^{1}-u _{H}^{1} \right ) \right \|_{s}^{2}
    &=\left \| \pi\left ( u _{h}^{0}-u _{H}^{0} \right ) \right \|_{s}^{2}+{\pi}'\left ( u _{h}^{1}-u _{H}^{1}-\left ( u _{h}^{0}-u _{H}^{0} \right ),u _{h}^{1}-u _{H}^{1} \right )\\
   &\quad +{\pi}'\left ( u _{h}^{1}-u _{H}^{1}-\left ( u _{h}^{0}-u _{H}^{0} \right ),u _{h}^{0}-u _{H}^{0} \right )\\
   &\leq \left \| \pi\left ( u _{h}^{0}-u _{H}^{0} \right ) \right \|_{s}^{2}+\left \| {\pi}'\left ( u _{h}^{1}-u _{H}^{1}-\left ( u _{h}^{0}-u _{H}^{0} \right ) \right ) \right \|_{s}\\
    &\quad \times \left ( \left \| \pi\left ( u _{h}^{1}-u _{H}^{1} \right ) \right \|_{L^{2}
 \left ( \Omega  \right )}^{2}+\left \| \pi\left ( u _{h}^{0}-u _{H}^{0} \right ) \right \|_{s}^{2} \right )\\
   &\leq \left \| u _{h}^{0}-u _{H}^{0} \right \|_{L^{2}}^{2}+2\left \| u _{h}^{0}-u _{H}^{0} \right \|_{L^{2}}^{2}\underset{0\leq p\leq N_1}{\max}\left \| \pi\left ( u _{h}^{p}-u _{H}^{p}  \right ) \right \|_{s}.
    \end{aligned}
\end{equation}
Similarly, for the first part on the right hand of (\ref{23}), we have
\begin{equation}
    \begin{aligned}
    &\quad {\pi}'\left ( u _{h}^{1}-u _{H}^{1}-\left ( u _{h}^{0}-u _{H}^{0} \right ), u _{h}^{p+1}-u _{H}^{p+1}-\left ( u _{h}^{p}-u _{H}^{p} \right ) \right )\\
    &\leq  2\left \|  u _{h}^{1}-u _{H}^{1}-\left ( u _{h}^{0}-u _{H}^{0} \right ) \right \|_{s}\underset{0\leq p\leq N_1}{\rm{max}}\left \| \pi\left ( u _{h}^{p}-u _{H}^{p} \right ) \right \|_{s}.
    \end{aligned}
\end{equation}
Finally, for the second part on the right hand of (\ref{23}), we note that
\begin{equation}
    \begin{aligned}
    & \quad \sum_{p=1}^{m}\sum_{r=1}^{p}\tau ^{2}s\left ( {\pi \left ( \frac{S_{H}u _{h}^{r+1}-S_{H}u _{h}^{r}}{\tau } \right )+\pi\left ( \partial_t ^{2}\left(u _{h}\right)-\frac{u _{h}^{r+1}-u _{h}^{r}}{\tau }\right )+P\left ( f^{r} \right ),} \right.\\
    &\quad \quad  \left. { u _{H}^{r+1}-F_{H}u _{h}^{r+1}+u _{H}^{r}-F_{H}u _{h}^{r}} \right)\\
    &\leq \sum_{p=1}^{m}G^{p}\left \| \pi\left ( u _{H}^{p+1}-F_{H}u _{h}^{p+1}+u _{H}^{p}-F_{H}u _{h}^{p} \right ) \right \|_{s}\\
    & \quad +\sum_{p=1}^{m}D^{p}\left \| P\left ( u _{H}^{p+1}-F_{H}u _{h}^{p+1}+u _{H}^{p}-F_{H}u _{h}^{p} \right ) \right \|_{s}\\
    &\leq 2\left ( \sum_{p=1}^{m}G^{p} \right )\underset{0\leq p \leq N_1}{\rm{max}}\left \| \pi\left ( u _{H}^{p}-F_{H}u _{h}^{p} \right ) \right \|_{s}\\
    & \quad +2\left ( \sum_{p=1}^{m}D^{p} \right )\underset{0\leq p \leq N_1-1}{\rm{max}}\left \| P\left (\frac{ u _{H}^{p+1}-F_{H}u _{h}^{p+1}+u _{H}^{p}-F_{H}u _{h}^{p}}{2}\right ) \right \|_{s}.
    \end{aligned}
\end{equation}
Using Young's inequality, we have
\begin{equation}
    \begin{aligned}
    &\quad \underset{0\leq p \leq N_1}{\rm{max}}\left \| \pi\left ( u _{H}^{p}-F_{H}u _{h}^{p} \right ) \right \|_{s}\\
    & \leq 2\left(1- \frac{c_{1}\tau ^{2}}{4c_{3}H^2} \right)^{-1} \left \|u _{H}^{0}-F_{H}u _{h}^{0} \right \|_{s}^{2}\\
    &\quad +8\left(1- \frac{c_{1}\tau ^{2}}{4c_{3}H^2} \right)^{-1}\left ( N_1\left \| u _{H}^{1}-F_{H}u _{h}^{1}-\left ( u _{H}^{0}-F_{H}u _{h}^{0} \right ) \right \|_{s}+\tau ^{2}\sum_{p=1}^{N_1-1}G^{p} \right )^{2}\\
    & \quad +4\left(1- \frac{c_{1}\tau ^{2}}{4c_{3}H^2} \right)^{-1}\left ( \tau ^{2}\sum_{p=1}^{N_1-1} D^{p}\right )^{2}+\underset{0\leq p \leq N_1-1}{\rm{max}}\left\|P\left(\frac{ u _{H}^{p+1}-F_{H}u _{h}^{p+1}+u _{H}^{p}-F_{H}u _{h}^{p}}{2}\right ) \right \|_{s}.
    \end{aligned}
\end{equation}
We can infer from (\ref{23}) that
\begin{equation}
    \begin{aligned}
    &\qquad\left(1- \frac{c_{1}\tau ^{2}}{4c_{3}H^2} \right)\left \| \pi\left ( u _{H}^{m+1}-F_{H}u _{h}^{m+1} \right ) \right \|_{s}^{2}\\
    &\quad +\frac{1}{2}\left \| \frac{\tau \sum_{p=1}^{m}\left ( u _{H}^{p}-F_{H}u _{h}^{p} \right )+\tau \sum_{p=1}^{m+1}\left ( u _{H}^{p}-F_{H}u _{h}^{p} \right )}{2}  \right \|_{\text{DG}}^{2}\\
    &\leq \left \|u _{H}^{0}-F_{H}u _{h}^{0}  \right \|_{s}^{2}\\
    &\quad +2\left ( \left ( m+1 \right )\left \| u _{H}^{1}-F_{H}u _{h}^{1}-\left ( u _{H}^{0}-F_{H}u _{h}^{0} \right ) \right \|_{s} +\tau ^{2}\sum_{p=1}^{m}G^{p}\right )\underset{0\leq p \leq N}{\rm{max}}\left \| \pi\left ( u _{H}^{p}-F_{H}u _{h}^{p} \right ) \right \|_{s}\\
    &\quad +2\left ( \tau ^{2} \sum_{p=1}^{m}D^{p}\right )\underset{0\leq p\leq N_1-1}{\rm{max}}\left \| P\left (\frac{ u _{H}^{p+1}-F_{H}u _{h}^{p+1}+u _{H}^{p}-F_{H}u _{h}^{p}}{2} \right ) \right \|_{s}.
    \end{aligned}
\end{equation}
By the Lemma \ref{lemmaa}, we note that
\begin{equation}
    N_1\left \| u _{H}^{1}-F_{H}u _{h}^{1}+\left ( u _{H}^{0}-F_{H}u _{h}^{0} \right )  \right \|_{s}\leq c_5\left (\tau+4\tau ^{2}\left(1- \frac{c_{1}\tau ^{2}}{4c_{3}H^2} \right)^{-1}\left(2c_5+1\right)+\frac{H^{2}}{c_2}c_5\right ).
\end{equation}
Again using $G^{p}\leq \frac{c_1}{2}\left(\frac{H^{2}}{c_2}+1\right)$ in (\ref{equ:gm}) and $D^{p}\leq \frac{\sqrt{2}H}{\sqrt{c_2}}+c_5 $ in (\ref{equ:dm}), we possess
\begin{equation}
    \tau ^{2}\sum_{p=1}^{N-1}\left ( G^{p} +D^{p} \right )\leq c_5\left(\tau+4\tau ^{2}\left(1- \frac{c_{1}\tau ^{2}}{4c_{3}H^2} \right)^{-1}\left(2c_5+1\right)+\frac{H^{2}}{c_2}c_5 \right ).
\end{equation}
Combining Theorem \ref{thm1}, we have
\begin{equation}
\begin{aligned}
&\quad \underset{0\leq p\leq N_1-1}{\rm{max}}\left \| u _{H}^{p}-F_{H}u _{h}^{p}\right \|_{s}\\
&\leq c_5\left (\left \| u _{H}^{0}-F_{H}u _{h}^{0} \right \|_{s} +\tau+4\tau ^{2}\left(1- \frac{c_{1}\tau ^{2}}{4c_{3}H^2} \right)^{-1}\left(2c_5+1\right)+\frac{H^{2}}{c_2}c_5\right ).
\end{aligned}
\end{equation}
Using $u _{H}^{0}-F_{H}u _{h}^{0}=S_{H}u _{h}^{0}-\left ( u _{h}^{0} -u _{H}^{0}\right ),$ we have
\begin{equation}
   \begin{aligned}
    &\quad  \underset{0\leq p\leq N_1-1}{\rm{max}}\left \|u _{H}^{p}-F_{H}u _{h}^{p}\right \|_{s}\\
    & \leq 12c_5\left ( \left \| u _{h}^{0}-u _{H}^{0} \right \|_{s}+ \underset{0\leq p\leq N_1}{\max}\left \| S_{H} u _{h}^{p}\right \|_{s}+\tau+4\tau ^{2}\left(1- \frac{c_{1}\tau ^{2}}{4c_{3}H^2} \right)^{-1}\left(2c_5+1\right)+\frac{H^{2}}{c_2}c_5\right ).
   \end{aligned}
\end{equation}
Using $u _{H}^{p}-F_{H}u _{h}^{p}=S_{H}u _{h}^{p}-\left ( u _{h}^{p} -u _{H}^{p}\right ),$ we possess
\begin{equation}
    \begin{aligned}
    &\quad \underset{0\leq p\leq N_1-1}{\rm{max}}\left \|u _{H}^{p}-u _{h}^{p}\right \|_{s}\\
    &\leq \underset{0\leq p\leq N_1-1}{\rm{max}}\left \| \frac{ u _{H}^{p+1}-F_{H}u _{h}^{p+1}+u _{H}^{p}-F_{H}u _{h}^{p}}{2}\right \|_{s}+\underset{0\leq p\leq N}{\rm{max}}\left \| S_{H}u _{h}^{m} \right \|_{s}\\
    &\leq 12c_5\left ( \left \| u _{H}^{0}-u _{h}^{0} \right \|_{s}+\underset{0\leq p\leq N_1}{\rm{max}}\left \| S_{H}u _{h}^{m} \right \|_{s}+\tau+4\tau ^{2}\left(1- \frac{c_{1}\tau ^{2}}{4c_{3}H^2} \right)^{-1}\left(2c_5+1\right)+\frac{H^{2}}{c_2}c_5 \right ).\\
    \end{aligned}
\end{equation}
Since $L^{2}:u _{h}^{0}\rightarrow u _{H}^{0},$ for any $u _{h}^{0}\in V_{\rm{cem}},$ we have
\begin{equation}
    \left \| u _{H}^{0}-u _{h}^{0} \right \|_{s}\leq \left \| S_{H}u _{h}^{0} \right \|_{s}\leq 12c_5 \left(\tau+4\tau ^{2}\left(1- \frac{c_{1}\tau ^{2}}{4c_{3}H^2} \right)^{-1}+\frac{H^{2}}{c_2}c_5\right).
\end{equation}
Hence we finish the proof.
\end{proof}

\section{Numerical results}\label{sec:Numerical results}
In this section, we use two numerical examples to show the accuracy of our multiscale model reduction approach. The situation in this paper is anisotropic, and for the sake of clarity of comparison, we consider two complicated layered anisotropic models shown in \autoref{fig:model4} and \autoref{fig:model21} respectively. We note that the parameters in \autoref{fig:model4} are $C_{11}, C_{13}, C_{33}, C_{55}$, the corresponding values in \autoref{fig:model21}  are  $C_{11}, 0.5C_{13}, 0.5C_{33}, 0.25C_{55}$.
The computational domain  $\Omega=\left(0,6000\right)^2,\  T = 0.4$ and $ \ \rho =1.$ We divide this spatial domain into $30\times30$ coarse elements   and each coarse element is divided into $20\times20$ fine elements,  therefore the fine-grid size is $h=10$.

The following relative error in $L^{2}$ norm and energy norm $H^{1}$ are used to quantify  the accuracy of our method: 
\begin{equation*}
\begin{aligned}
     error_{L^{2}}&=\frac{\left\|u_{H}-u_{h}\right\|_{L^{2}(\Omega)}}{\left\|u_{h}\right\|_{L^{2}(\Omega)}},\\
     error_{H^{1}}&=\frac{\left\|u_{H}-u_{h}\right\|_{H^{1}(\Omega)}}{\left\|u_{h}\right\|_{H^{1}(\Omega)}}.
\end{aligned}
\end{equation*}

 \autoref{tablesymbol} lists  the notations of some parameters.

\begin{table}[H]
\setlength{\abovecaptionskip}{0pt}
\setlength{\belowcaptionskip}{10pt}
\centering
\caption{\label{tablesymbol}Simplified description of symbols}
\begin{tabular}{l|llllll}
\cline{1-2}
Parameters                                        & Symbols &  &  &  &  &  \\ \cline{1-2}
Number of oversampling layers                    & $Nol$      &  &  &  &  &  \\
Number of basis functions in each coarse element & $Nbf $    &  &  &  &  &  \\
Length of each coarse element size               & $L $      &  &  &  &  &  \\
 \cline{1-2}
\end{tabular}
\end{table}

To fully examine the influence of the number of oversampling layers($Nol$), $Nol$ is taken here as $m=5, m=6, m=7$ and $m=8.$ In all these cases, we use $20$ auxiliary basis functions in each coarse block to construct  corresponding local multiscale basis functions. For fair comparison, we use  same time step for both the CEM-GMsDGM and the fine-scale methods, as we only consider spatial ascending scales in this paper. However, we note that multiscale basis functions can be used for different source terms and boundary conditions, which will result in significant computational savings. The source function is
\begin{equation}
f\left(t, x, y\right)=e^{-10^{2}\left(\left(x-0.5\right)^{2}+\right(y-0.5\left)^{2}\right)}\left(1-2 \pi^{2} f_{0}^{2}\left(t-2 / f_{0}\right)^{2}\right) e^{-\pi^{2} f_{0}^{2}\left(t-2 / f_{o}\right)^{2}},
\end{equation}
where the center frequency is chosen to be $f_{0}=10$.  The time step size $\Delta  t=10^{-4}$ and the penalty paramter $\gamma = 2.$

\subsection{Model 1}

\begin{figure}[H]
\centering
\includegraphics[width=0.9\textwidth]{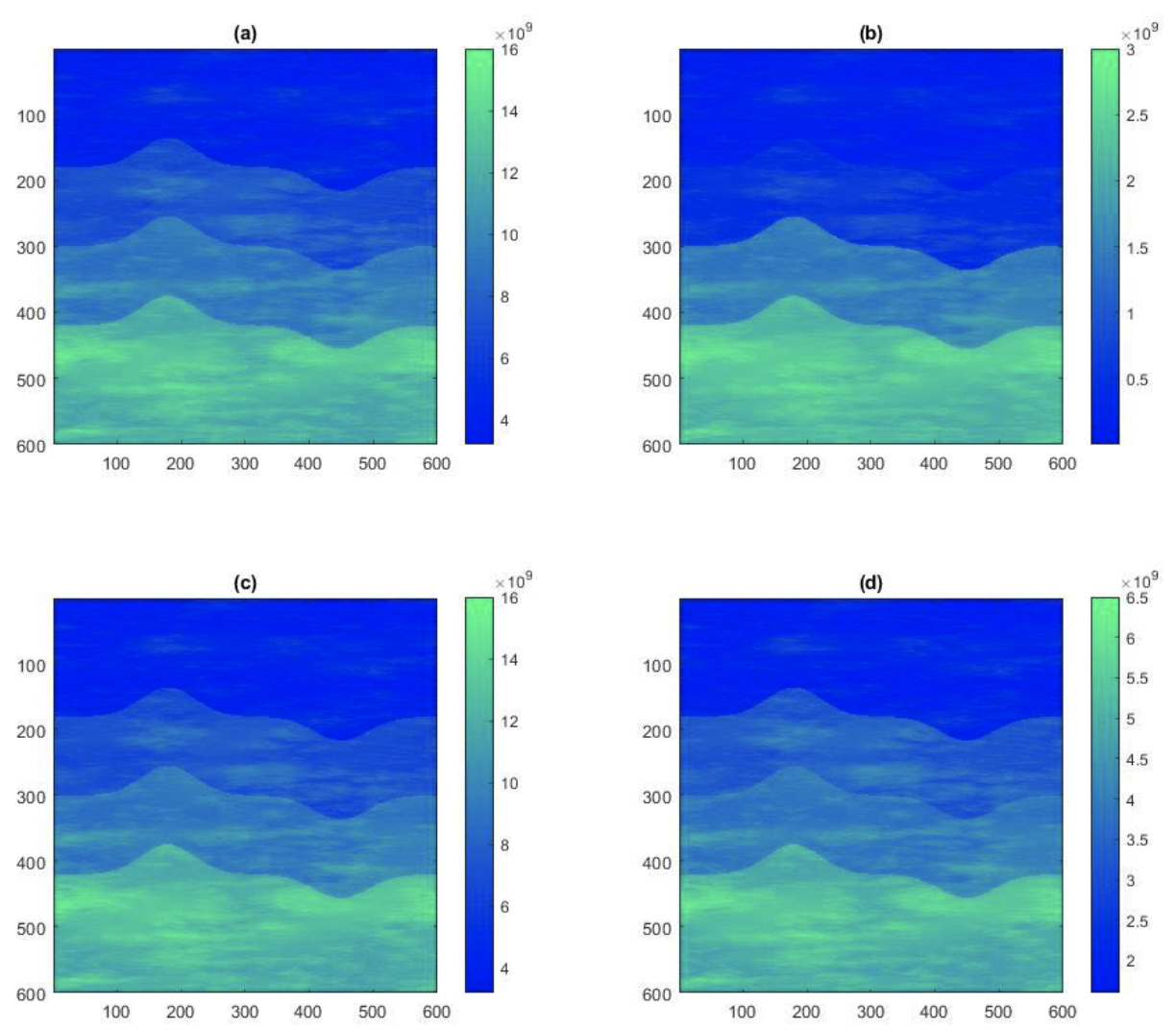} 
\caption{Model 1: An anisotropic elastic model. Panels (a)-(d) represent $C_{11}, C_{13}, C_{33}, $ and $ C_{55},$  respectively, the units here is GPa.}\label{fig:model4}
\end{figure}

\autoref{fig:wave4}  and \autoref{fig:wave22} depict the first and second component of the two numerical solutions (the fine scale solution and the multiscale solution) for Model 1 at the final time $T = 1.25$.  
\autoref{NolL}   investigates the $H^{1}$ error and $L^{2}$ error of our new method with the two variables layers, the number of oversampling layers and coarse element size. 

To study the convergence of our proposed multiscale method, we compare the coarse-scale approximation with the fine-grid solution.  In \autoref{NolL}, we set the number of basis functions on each coarse grid to $6$. As the number of oversampling layers increases from $7$ to $8$  and the coarse element size doubles from $1/15$ to $1/30$, the $L^ 2$ error decreases from  $58.3498\%$ to $31.2438\%$ and the $H^ 1$ error decreases from $49.2020\%$ to $13.4207\%.$  In the process of increasing the number of oversampling layers by $6$ at a time to $8$ from an equal number of $1$  and decreasing coarse element size  from $1/30$ at a time, the $L^2$ error is reduced by a factor of $5$ each time, from $31.2438\%$ to $1.9321\%.$  
The difference is that the $H^1$ error is reduced by a factor of $25$  from $13.4207\%$  to $0.6111\%$  as the number of oversampling layers is $6$ and the coarse element size is $1/30$ to $8$ and the coarse element size is $1/120.$   It can been observed that the method results in good accuracy and desired convergence in error.  \autoref{fig:wave4} and \autoref{fig:wave22} depict the numerical solutions by the fine-scale formulation and the coarse-scale formulation at the final time $T = 1.25.$  The comparison suggests that our new method provides very good accuracy at a reduced computational expense.

\begin{table}[H]
\setlength{\abovecaptionskip}{0pt}
\setlength{\belowcaptionskip}{10pt}
\centering
\caption{\label{NolL} $Nbf=6.$}
\begin{tabular}{cllllllclllll}
\cline{1-10}
$Nol$ &  &  & $L$ &  &  & \multicolumn{1}{c}{$error_{L^{2}}$} &  &  & $error_{H^{1}}$ &  &  &    \\ \cline{1-10}
5   &  &  & $1/15$ &  &  &   58.3498\%   &  &  &  49.2020\%  &  &  &  \\
6  &  &  &  $1/30$ &  &  &  31.2438\%   &  &  &  13.4207\% &  &  &  \\
7  &  &  &  $1/60$ &  &  &   8.9200\%  &  &  &  1.8301\%   &  &  &  \\ 
8  &  &  &  $1/120$  &  &  & 1.9321\%   &  &  &  0.6111\%      &  &  & \\
\cline{1-10}
\end{tabular}
\end{table}

\begin{figure}[H] 
\centering 
\includegraphics[width=1\textwidth]{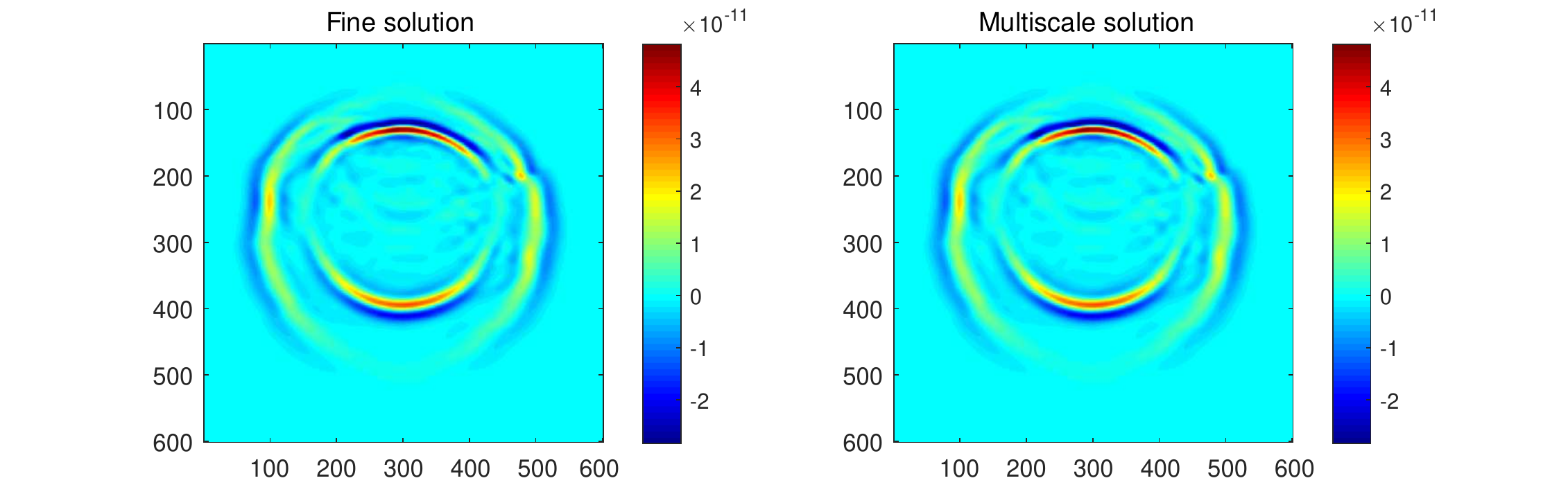} 
\caption{Comparison of first component of the reference solution and the multiscale solution: the left figure is the first component of the reference solution at $T=1.25$, the right figure is the first component of the CEM-GMsFEM solution at $T=1.25$. }
\label{fig:wave4} 
\end{figure}

\begin{figure}[H] 
\centering 
\includegraphics[width=1\textwidth]{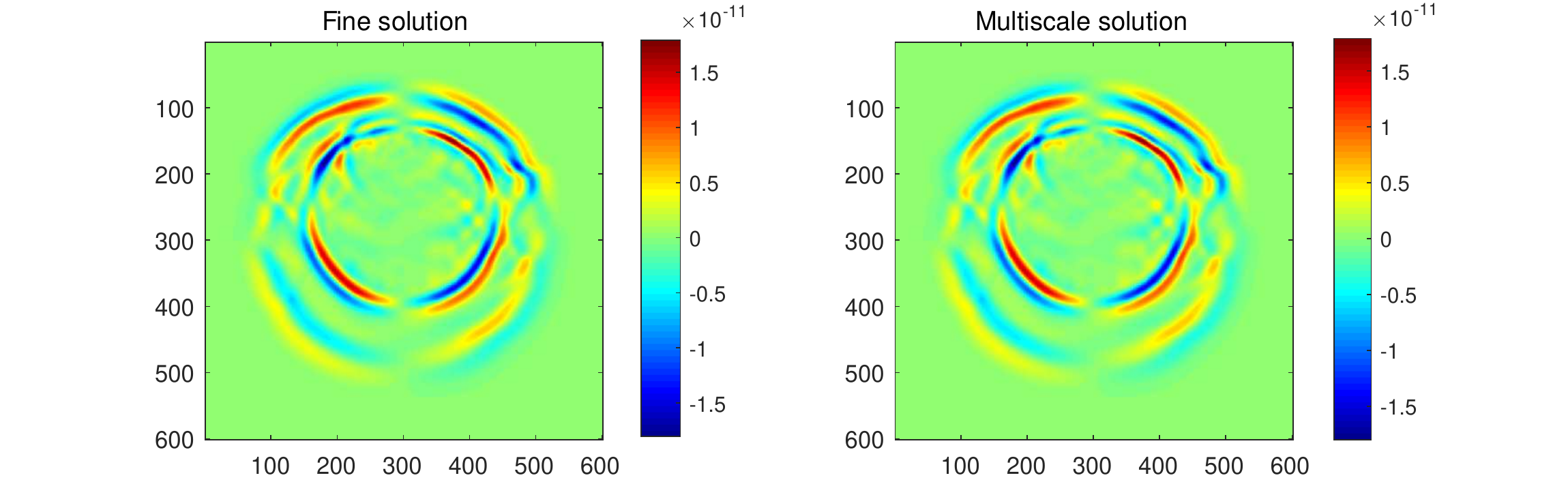} 
\caption{Comparison of second component of the reference solution and the multiscale solution: the left figure is the second component of the reference solution at $T=1.25$, the right figure is the second component of the CEM-GMsFEM solution at $T=1.25$. }
\label{fig:wave22} 
\end{figure}

\subsection{Model 2}

\autoref{fig:model2ff1}  and \autoref{fig:model2ff2} depict the first and second component of the two numerical solutions (the fine scale solution and the multiscale solution) for Model $2$ at the final time $T = 1.25$.  
\autoref{NolL2}   investigates the $H^{1}$ error and $L^{2}$ error of our new method with the two variables layers, the number of oversampling layers and coarse element size.

The solid experimental data for model 2 demonstrate our method's computational cost reductions as well as the correctness of the numerical findings.  When we first set the coarse scale size to $1/15$ and the number of oversampled grids to $5$, the $ L^2$ error  is $72.4120\%$  and  $H^1$ is  $59.3120\%$,  both of which are more than $10\%$  higher than in model  $1$  with the same parameter settings. However, as the coarse grid size decreases and the number of oversampling layers increases, the $L^2$ error and  $H^1$ error both drop dramatically, especially when the coarse grid size is reduced to  $1/120$  and the number of oversampling layers  is increased to $8$,  the $L^2$ error has been reduced by a factor of approximately  $35$ and the $H^1$  error has been reduced by a factor of $70$.

\begin{figure}[H]
\centering
\includegraphics[width=1.0\textwidth]{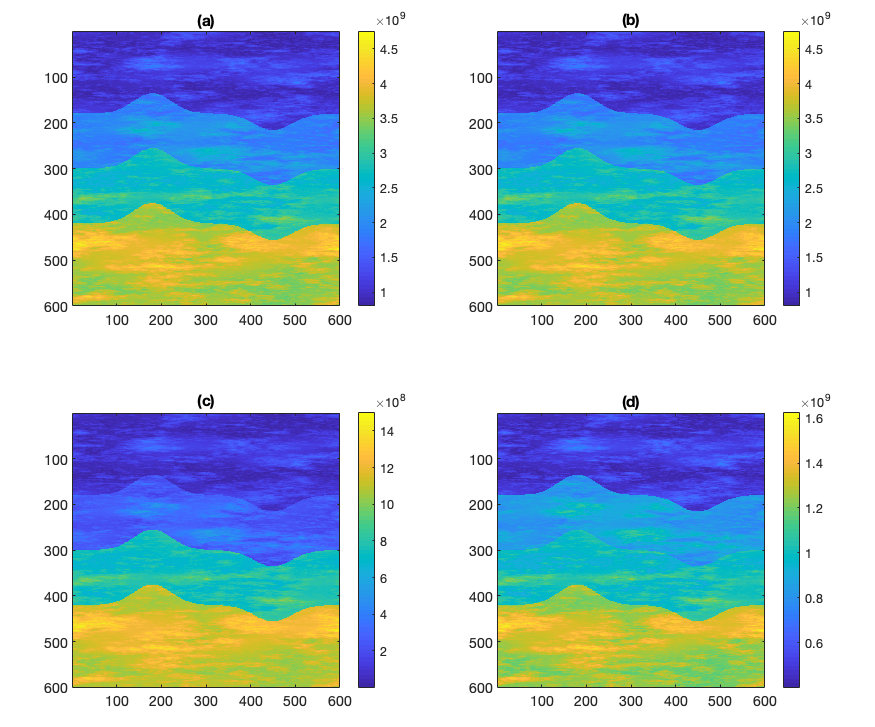} 
\caption{Model2: An anisotropic elastic model. Panels (a)-(d) represent $C_{11}, 0.5C_{13}, 0.5C_{33}, $ and $ 0.25C_{55},$  respectively, the units here is GPa.}\label{fig:model21}
\end{figure}


\begin{table}[H]
\setlength{\abovecaptionskip}{0pt}
\setlength{\belowcaptionskip}{10pt}
\centering
\caption{\label{NolL2} $Nbf=6.$}
\begin{tabular}{cllllllclllll}
\cline{1-10}
$Nol$ &  &  & $L$ &  &  & \multicolumn{1}{c}{$error_{L^{2}}$} &  &  & $error_{H^{1}}$ &  &  &    \\ \cline{1-10}
5   &  &  & $1/15$ &  &  &72.4120\%   &  &  & 59.3120\%  &  &  &  \\
6  &  &  &  $1/30$ &  &  &  35.4210\%   &  &  &  15.2218\% &  &  &  \\
7  &  &  &  $1/60$ &  &  &   8.9341\%  &  &  &  2.3101\%   &  &  &  \\ 
8  &  &  &  $1/120$  &  &  & 2.0912\%   &  &  &  0.8246\%      &  &  & \\
\cline{1-10}
\end{tabular}
\end{table}

\begin{figure}[H] 
\centering 
\includegraphics[width=1.0\textwidth]{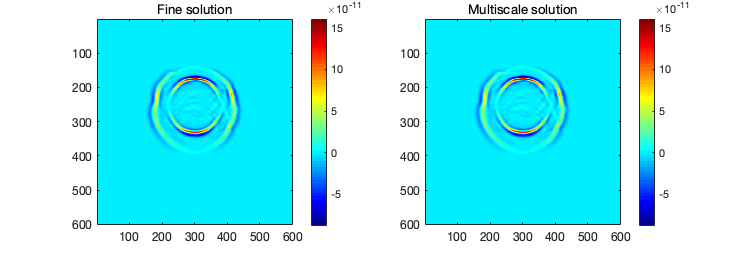} 
\caption{Comparison of first component of the reference solution and the multiscale solution: the left figure is the first component of the reference solution at $T=1.25$, the right figure is the first component of the CEM-GMsFEM solution at $T=1.25$. }
\label{fig:model2ff1} 
\end{figure}

\begin{figure}[H] 
\centering 
\includegraphics[width=1.0\textwidth]{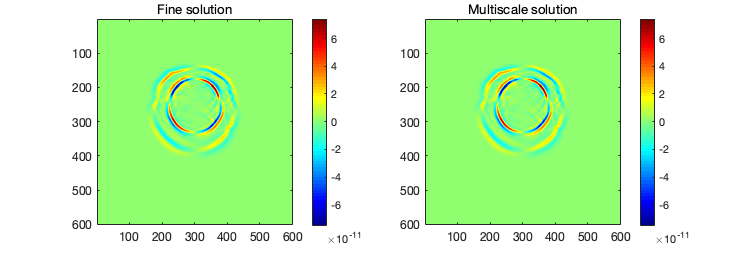} 
\caption{Comparison of second component of the reference solution and the multiscale solution: the left figure is the second component of the reference solution at $T=1.25$, the right figure is the second component of the CEM-GMsFEM solution at $T=1.25$. }
\label{fig:model2ff2} 
\end{figure}

The conclusions obtained are as follows:
\begin{itemize}
    \item The number of oversampled layers and the length of the coarse scale have an inverse relationship with error; more specifically, as the number of oversampled layers grows and the length of the coarse scale drops, the accuracy improves.

    \item In our testing, when the number of basis functions is $ 6$, the number of oversampling layers is pretty high and ranges within $20\%$, and raising the number of oversampling layers to $8$ yields good error outcomes. As a consequence, this experiment illustrates that our technique produces extremely accurate findings while saving a large amount of processing time.
\end{itemize}


\section{Conclusion}\label{sec:Conclusion}
In this paper, we present a multiscale approach to elastic wave propagation applicable to anisotropy, also known as the CEM-GMsDGM, which is a general multiscale model reduction method for the heterogeneous wave equation in the DG framework. We explore the solution of multiscale experimental basis functions for discontinuities on a coarse grid, a process that can be solved by constrained energy minimisation in the oversampling region to obtain multiscale experimental basis functions.The CEM-GMsDGM is shown to be stable and spectrally convergent in both theory and practice. Moreover, to verify the effectiveness of the method, we devise a modified Möbius model, and it turns out that the accuracy of the multiscale solution is closely related to the number of oversampling layers used in the modelling. The level of accuracy can be controlled by varying this number, which is important in applications where more approximate results can be accepted.
\section*{Acknowledgments}

The research of Eric Chung is partially supported by the Hong Kong RGC General Research Fund (Project numbers 14304719 and 14302620) and CUHK Faculty of Science Direct Grant 2021-22.  

\small

\bibliography{secondpaper}

\end{document}